\newcommand{\Cgf}{{\sf C}}
\newcommand{\Ggf}{{\sf G}}
\newcommand{\bGgf}{\overline{\Ggf}}
\newcommand{\Hgf}{{\sf H}}
\newcommand{\Qgf}{{\sf Q}}
\newcommand{\Rgf}{{\sf R}}
\newcommand{\Sgf}{{\sf S}}
\newcommand{\Tgf}{{\sf T}}
\newcommand{\Wgf}{{\sf W}}
\newcommand{\SL}{\mathit{SL}}
\newcommand{\PSL}{\mathit{PSL}}
\newtheorem{Theorem}{Theorem}[section]
\newtheorem{Lemma}[Theorem]{Lemma}
\newtheorem{Proposition}[Theorem]{Proposition}
\renewcommand\th{\vartheta}
\newcommand{\beq}{\begin{equation}}
\newcommand{\eeq}{\end{equation}}
\def\emm#1,{{\em #1}}
\def\section{\@startsection{section}{1}%
 \z@{.7\linespacing\@plus\linespacing}{.5\linespacing}%
 {\normalfont\bfseries\scshape\centering}}
\def\subsection{\@startsection{subsection}{2}%
  \z@{.5\linespacing\@plus\linespacing}{.5\linespacing}%
  {\normalfont\bfseries\scshape}}
\def\subsubsection{\@startsection{subsubsection}{3}%
 \z@{.5\linespacing\@plus\linespacing}{-.5em}
 {\normalfont\bfseries}}
\newcommand{\zs}{\mathbb{Z}}
\newcommand{\om}{\omega} 
\renewcommand{\epsilon}{\varepsilon}
\title{The six-vertex model on random planar maps revisited}
\DeclareMathOperator{\tr}{tr}
\newcommand\ZZ{\mathbb{Z}}
\long\def\remint#1#2#3{\begin{tikzpicture}[baseline=-\the\dimexpr\fontdimen22\textfont2\relax]\node[outer sep=0pt,draw=black,fill=#1,fill opacity=0.5,text opacity=1,rectangle,rounded corners,text width=#3,align=flush left]{#2};
\end{tikzpicture}}
\newdimen\mylinewidth
\newcommand\rem[2][cyan!40!green]{%
\mylinewidth=.97\linewidth
\setbox0\hbox{#2}%
\ifdim\wd0>\mylinewidth
\strut\\\vbox{%
  \hsize=\linewidth
  \kern-\lineskip
  \raggedright
  \strut\remint{#1}{#2}{\mylinewidth}%
}%
\else
\hskip0pt plus\linewidth
\penalty0
\,\remint{#1}{#2}{\wd0}\,
\hskip0pt plus\linewidth
\fi%
}
\renewcommand{\d}{\mathrm{d}}
\newcommand{\der}{\partial}
\newcommand\W{W^{(0)}}
\newcommand\G{G^{(0)}}
\newcommand\bG{\bar G^{(0)}}
\renewcommand\H{H^{(0)}}
\begin{document}

\author{
Andrew Elvey Price%
	\and
Paul Zinn-Justin
}

\thanks{Andrew Elvey Price was supported by the European Research Council (ERC) in the European Union’s Horizon 2020 research and innovation programme, under the Grant Agreement No.~759702. 
PZJ was supported by ARC grants FT150100232 and DP180100860.}

\newcommand{\Addresses}{{
  \bigskip
  \footnotesize

  A.~Elvey Price, \textsc{Laboratoire Bordelais de Recherche en Informatique, UMR 5800, Universit\'e de Bordeaux, 351 Cours de la Libération, 33405 Talence Cedex, France}\par\nopagebreak
  \textit{E-mail address:} \texttt{andrew.elvey@univ-tours.fr}

  \medskip

  P.~Zinn-Justin, \textsc{School of Mathematics and Statistics, The University of Melbourne, Victoria 3010, Australia}\par\nopagebreak
  \textit{E-mail address:} \texttt{pzinn@unimelb.edu.au}

}}

\maketitle
\begin{abstract}
We address the six vertex model on a random lattice, which in combinatorial terms corresponds to the enumeration of weighted 4-valent planar maps equipped with an Eulerian orientation. This problem was exactly, albeit non-rigorously solved by Ivan Kostov in 2000 using matrix integral techniques. We convert Kostov's work to a combinatorial argument involving functional equations coming from recursive decompositions of the maps, which we solve rigorously using complex analysis. We then investigate modular properties of the solution, which lead to simplifications in certain special cases. In particular, in two special cases of combinatorial interest we rederive the formulae discovered by Bousquet-M\'elou and the first author.
\end{abstract}
\section{Introduction}
\begin{figure}[ht]
\centering
   \includegraphics[scale=0.7]{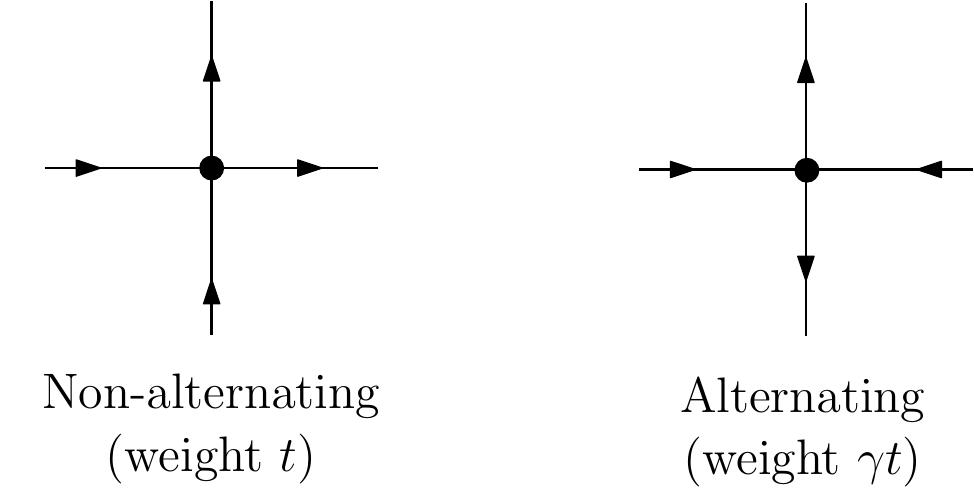} 
   \caption{The two types of vertices in the six vertex model on a random lattice.}
   \label{fig:two vertex types}
\end{figure}
We determine the generating function $\Qgf(t,\gamma)$ which counts these weighted 4-valent planar maps equipped with an Eulerian orientation with $t$ counting vertices and $\gamma$ counting {\em alternating} vertices, that is vertices in which the two outgoing edges are opposite each other (see figure \ref{fig:two vertex types}). This is equivalent to the six-vertex model on dynamical random lattices, which was exactly, but non-rigorously solved by Kostov in 2000 \cite{Kostov-6v} using matrix integral techniques, after work by the second author \cite{artic10}. In this article we give a purely combinatorial rephrasing of Kostov's arguments, yielding a (rigorous) exact solution. In doing so, we correct a mistake, yielding a much simplified expression for $\Qgf(t,\gamma)$ compared to the (incorrect) expression that could be extracted directly from \cite{Kostov-6v}. Our solution is in terms of the classical Jacobi theta function $\th\equiv \theta_{1}$
\begin{align}\label{theta-def0}
\th(z)\equiv\th(z,q)&= 2\sin(z) q^{1/8} \prod_{n=1}^{\infty} (1-2\cos(2z)q^n+q^{2n})(1-q^n),\\
&= -i \sum_{n\in \zs} (-1)^n e^{(n+1/2)^2\pi i\tau+(2n+1)iz},
\end{align}
where $q=e^{2\pi i\tau}$. We will alternately see this as a power series in $q$ or an analytic function of $\tau$ and $z$. Our main theorem is the following:
\begin{Theorem}\label{thm:allgamma}
Write $\gamma=-2\cos(2\alpha)$, and   let $q(t,\gamma)\equiv q= t+ \left( 
6\,\gamma+6 \right) {t}^{2}+\cdots $ be the
unique formal power series in $t$ with constant
  term $0$ satisfying
\[
t=  \frac{\cos\alpha}{64\sin^3\alpha}
\left(
-\frac{\th(\alpha,q)\th'''(\alpha,q)}{\th'(\alpha,q)^2}+\frac{\th''(\alpha,q)}{\th'(\alpha,q)}
\right),
\]
where all derivatives are with respect to the first variable. Moreover, define the series $\Rgf(t,\gamma)$ by
\[
\Rgf(t,\gamma)=\frac{\cos^2\alpha}{96\sin^4\alpha}
\frac{\th(\alpha,q)^2}{\th'(\alpha,q)^2}
\left(-\frac{\th'''(\alpha,q)}{\th'(\alpha,q)}
+\frac{\th'''(0,q)}{\th'(0,q)}\right).
\]
 Then the generating function of quartic rooted planar Eulerian orientations, counted by
vertices, with a weight $\gamma$ per alternating vertex is
\[
\Qgf(t,\gamma)= \frac{1}{(\gamma+2)t^2}\left( t-(\gamma+2)t^2-\Rgf(t,\gamma)\right).
\]
\end{Theorem}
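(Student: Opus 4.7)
The plan is to translate Kostov's matrix-integral derivation into a purely combinatorial argument based on Tutte-style recursive decompositions of 4-valent Eulerian-oriented planar maps, then solve the resulting functional equations rigorously via an elliptic parametrization in terms of Jacobi theta functions.

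First, I would introduce refined generating functions carrying two catalytic variables $x$ and $y$ that track data on the two sides of a distinguished root edge (for instance, the heights of the Eulerian orientation, or the lengths of sequences of edges of a prescribed orientation). Erasing the root edge and distinguishing whether the root vertex is alternating (weight $\gamma$) or non-alternating (weight $1$) should yield a finite system of polynomial-in-catalytic-variables equations, relating these refinements to one-variable specializations and in particular to $\Qgf$ and the auxiliary series $\Rgf$.

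Second, I would substitute an elliptic uniformization $x=X(z,q)$, $y=Y(z,q)$, with $X$ and $Y$ suitable ratios of Jacobi theta functions, under which the parameter $\alpha$ defined by $\gamma=-2\cos(2\alpha)$ arises as a distinguished value of $z$. After this substitution the functional equations should become identities between meromorphic functions of $z$ with prescribed zeros, poles, and quasi-periods. Applying Liouville's theorem for elliptic functions, one identifies the unknown series with explicit theta-function expressions: the implicit equation determining $q(t,\gamma)$ arises as the normalization condition that $t$ counts vertices, while the closed form for $\Rgf$ follows by evaluating a suitable combination at $z=\alpha$.

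The main obstacle will be to show rigorously that the formal power series solution of the combinatorial functional equations coincides with the resulting analytic theta-function formula. This requires verifying that the implicit equation admits a unique formal power series solution $q(t,\gamma)$ in $t$ with zero constant term and the stated initial coefficients, that the theta-function expressions admit convergent $q$-expansions in the appropriate regime, and that the resulting formal series indeed satisfy the original functional equations. A uniqueness argument for polynomial functional equations with catalytic variables, in the spirit of the kernel or quadratic method, then completes the proof. Matching the formula against the specializations $\gamma=0$ and $\gamma=2$ treated by Bousquet-M\'elou and the first author provides an independent consistency check.
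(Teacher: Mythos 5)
Your plan has the same overall architecture as the paper's proof --- (i) Tutte-style functional equations in catalytic variables, (ii) an elliptic/theta-function ansatz discovered analytically, (iii) a uniqueness-plus-verification argument to identify the analytic formula with the formal power series --- so there is no wrong turn, but the middle step is organised differently. You propose to uniformize the catalytic variables $x,y$ themselves by theta quotients and apply Liouville's theorem directly to the catalytic equations, in the spirit of the kernel method for quadrant walks. The paper instead first splits each $4$-valent vertex into two $3$-valent ones, introducing a weight $\omega^{\pm1}$ per left/right turn with $\gamma=\omega^2+\omega^{-2}$; this yields the equations \eqref{wloopcombi}--\eqref{hloopcombi}, whose uniqueness of solution (Lemma \ref{lem:uniqueness}) plays the role of your ``uniqueness argument for polynomial functional equations with catalytic variables.'' The elliptic parametrization then enters not through the catalytic variables but through a one-cut assumption on the resolvent-type function $\W(x)$: eliminating $\G,\bG$ across the cut reduces everything to the single linear gluing condition \eqref{eq:spe}, and it is the doubly connected complement of the two cuts that is uniformized by a cylinder, forcing $U(x(z))=a+b\wp(z)$; the equation for $q(t,\gamma)$ and the formula for $\Rgf$ come from the normalization $\oint U\,dx/(2\pi x)=1$ and the subleading term of $U$ at infinity, much as you anticipate. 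The part of your plan you flag as ``the main obstacle'' is indeed where the real work lies, and your sketch underestimates it: the paper must exhibit $\W,\G,\bG$ as explicit contour integrals, prove they expand in $\mathbb{C}[x][[t]]$, and prove that the numerator defining $\Hgf(x,y)$ is divisible by the kernel $xy-\omega y-\omega^{-1}x$ coefficientwise so that $\Hgf\in\mathbb{C}[x,y][[t]]$ --- none of which follows from generic convergence considerations. One small factual slip: the cases solved by Bousquet-M\'elou and the first author are $\gamma=0$ and $\gamma=1$, not $\gamma=2$.
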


Recently this problem has been considered in the combinatorics literature \cite{BoBoDoPe,BM_AEP,elvey-guttmann17}. In particular Bousquet-M\'elou and the first author \cite{BM_AEP} exactly solved the unweighted case $\gamma=1$ as well as the case $\gamma=0$, which they showed, bijectively, to be equivalent to the enumeration of Eulerian orientations. Their solution has a much simpler form than our solution for general $\gamma$, in that the series $\Rgf(t,\gamma)$ is the functional inverse of a simple hypergeometric series in these cases. In the final sections, we show how to derive this solution from our more general solution, and we find that similar simplifications occur whenever $\gamma$ is of the form $2\cos(\pi k)$, for $k\in\mathbb{Q}$. We note that \cite{BM_AEP} was combined with an early version of the present work in the form of an extended abstract \cite{artic74}.

The bijection used in \cite{BM_AEP} shows, more generally, that the generating function $\Qgf(t,\gamma)$ counts planar maps equipped with an Eulerian {\em partial} orientation in which each edge may or may not be directed, and each vertex has equally many incoming as outgoing edges. In this context $\Qgf(t,\gamma)$ counts these partial orientations by edges ($t$) with a weight $\gamma$ per undirected edge.

As noted in \cite{BM_AEP}, Kostov's work was largely overlooked in the combinatorics literature due in part to the ``unfamiliar language and techniques used''. One of our aims in this work is to describe the techniques used in more combinatorial language, so that they can become more familiar in combinatorics. Indeed, we expect that similar techniques could be applied to a wide variety of functional equations appearing in combinatorics. As an example, the first author has adapted these techiques to the enumeration of walks on various lattices by winding number (see \cite{AEP_winding} for an extended abstract). More generally we note that there is a strong similarity to the enumeration of certain lattice walks confined to a quadrant in terms of the Weierstrass elliptic function \cite{Rachel_Kurkova, Fayolle_Raschel, BeBoRa}.

The outline of this article is as follows: In Section \ref{sec:tutte}, we derive a system of functional equations which characterises the generating function $\Qgf(t,\gamma)$. In Section \ref{sec:derivation} we solve these equations under an assumption known generally as the {\em one-cut assumption}, thereby non-rigorously deriving Theorem \ref{thm:allgamma}. These two sections are essentially a rephrasing (and correction) of Kostov's work \cite{Kostov-6v}. We describe the relationship between our functional equations and the matrix integral approach \cite{Kostov-6v,artic10} in Appendix \ref{sec:matrix}. In Section \ref{sec:proof}, we use our non-rigorously derived solution to find the unique series satisfying the functional equations of section \ref{sec:tutte}, thereby {\em proving} Theorem \ref{thm:allgamma}. Sections \ref{sec:DE}, \ref{sec:modular_properties} and \ref{sec:particular_cases} are dedicated to analysing the series in Theorem \ref{thm:allgamma}. In particular, in Section \ref{sec:DE} we derive a differential equation relating the series $\Qgf(t,\gamma)$ to simpler series. In Section \ref{sec:modular_properties}, we use this differential equation to derive modular properties of the solution for infinitely many values of $\gamma$. Finally, in Section \ref{sec:particular_cases}, we analyse certain the solution for certain specific such values of $\gamma$, including $\gamma=0$ and $\gamma=1$, in which cases we rederive the solutions of \cite{BM_AEP}.

\section{Functional equations for $\Qgf(t,\gamma)$}\label{sec:tutte}
In this section we derive functional equations which characterise $\Qgf(t,\gamma)$ using the recursive method \`a la Tutte \cite{tutte-census-maps}. We describe the connection with the matrix model studied by Kostov and the second author \cite{Kostov-6v,artic10} in Appendix \ref{sec:matrix}. In particular, this section (and the start of the next section) can be seen as a combinatorial rephrasing of the matrix integral approach, after which we arrive at the same functional equations as Kostov.


We start with a bijection relating the generating function $\Qgf(t,\gamma)$ to a generating function counting partially oriented 3-valent maps in which each vertex is incident to one incoming edge, one outgoing edge and one undirected edge. In these maps there are two types of vertices, shown in Figure \ref{fig:two_A_vertex_types}, which we call {\em right turn vertices} and {\em left turn vertices}. Each right turn vertex is given weight $\omega^{-1}$, each left turn vertex is given weight $\omega$ and each undirected edge is given weight $t$. As usual, there is a unique root edge and incident root vertex such that the root edge is oriented away from the root vertex. The sum of the resulting weights of all such maps is denoted by the generating function $\Cgf(t,\omega)$. We will now prove the following lemma, relating $\Cgf(t,\omega)$ and $\Qgf(t,\gamma)$:
\begin{Lemma}
The generating functions $\Qgf(t,\gamma)$ and $\Cgf(t,\omega)$ are related by the equation
\[\Qgf(t,\omega^{2}+\omega^{-2})=\Cgf(t,\omega)\]
\end{Lemma}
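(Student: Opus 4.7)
The plan is to exhibit a weight-preserving bijection $\Phi$ between rooted partially-oriented 3-valent maps of the kind counted by $\Cgf(t,\omega)$ and pairs consisting of a rooted 4-valent Eulerian-oriented map (of the kind counted by $\Qgf$) together with a ``local splitting'' chosen at each alternating vertex. Summing over the $2^{k}$ choices of splitting, where $k$ denotes the number of alternating vertices of the underlying 4-valent map, introduces a factor $(\omega^2+\omega^{-2})^k=\gamma^k$, which is exactly what is needed to identify $\Cgf(t,\omega)$ with $\Qgf(t,\omega^2+\omega^{-2})$.

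In the forward direction, $\Phi$ contracts every undirected edge. At each endpoint of such an edge $e$, the two remaining half-edges are one incoming and one outgoing by hypothesis, so the merged 4-valent vertex automatically has two incoming and two outgoing half-edges and is Eulerian. Since 4-valent vertices correspond bijectively to contracted undirected edges, the weights $t$ match. We root the resulting map by preserving the root edge of the 3-valent map, which we can take by convention to be directed and is therefore unaffected by the contraction, together with its root vertex.

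The crux is a local case analysis at a single vertex of the 4-valent map, comparing the possible cyclic orderings at the two 3-valent endpoints of a contracted edge together with the in/out labels of the surrounding half-edges. One checks that the merged 4-valent vertex is alternating exactly when the two 3-valent endpoints have the same turn-type, and non-alternating exactly when they have opposite turn-types. Moreover, the two possible geometric splittings of an alternating vertex correspond respectively to the ``both left-turn'' and ``both right-turn'' preimages, contributing $\omega\cdot\omega=\omega^{2}$ and $\omega^{-1}\cdot\omega^{-1}=\omega^{-2}$, whereas a non-alternating vertex admits a unique admissible splitting of type ``one left, one right'', contributing $\omega\cdot\omega^{-1}=1$. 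The other geometric splitting at a non-alternating vertex is ruled out because it would group both incoming half-edges on one side and both outgoing on the other, violating the 3-valent local constraint.

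Given this local picture, the inverse construction is immediate: starting from a 4-valent Eulerian-oriented map with a chosen splitting at each alternating vertex, one blows up every 4-valent vertex into two 3-valent vertices joined by a new undirected edge according to the prescribed splitting (or, at non-alternating vertices, to the unique admissible one), with the turn-types of the resulting 3-valent vertices dictated by the surrounding orientations. This is manifestly inverse to $\Phi$, weights are preserved at every step, and summing the weights of all preimages of a fixed 4-valent map reproduces $\prod_{\text{alt.\ vertices}}(\omega^2+\omega^{-2})=\gamma^{k}$, yielding the claimed identity. I expect the main obstacle to be the planar-cyclic-order bookkeeping in the local case analysis---essentially a small diagram-chase, tedious but not conceptually deep---and, as a minor point, fixing a rooting convention that is clearly invariant under $\Phi$.
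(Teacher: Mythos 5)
Your proposal is correct and is essentially the paper's own argument: the paper performs the same local splitting of each 4-valent vertex into two 3-valent vertices joined by an undirected edge (two choices of weight $\omega^{2}t$ and $\omega^{-2}t$ at alternating vertices, one choice of weight $t$ otherwise), and notes that the inverse is contraction of undirected edges. Your presentation merely runs the bijection in the contraction direction first and spells out the local case analysis in slightly more detail.
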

\begin{proof}
Starting with a 4-valent Eulerian orientation, we split each vertex into a pair of three valent vertices as shown in Figure \ref{fig:two vertex types transformations} such that each resulting vertex is incident to one incoming edge, one outgoing edge and one undirected edge. For alternating vertices there are two possible choices, one with weight $\omega^2 t$ and one with weight $\omega^{-2} t$, while there is only one choice for non-alternating vertices, and it has weight $t$. This explains the correspondence $\gamma=\omega^2+\omega^{-2}$, as this ensures that the total weight of the possible pairs of vertices 3-valent vertices is equal to the weight of the original 4-valent vertex in each case. Finally note that we can reverse this transformation by simply contracting all undirected edges.
\end{proof}
\begin{figure}[ht]
\centering
   \includegraphics[scale=0.7]{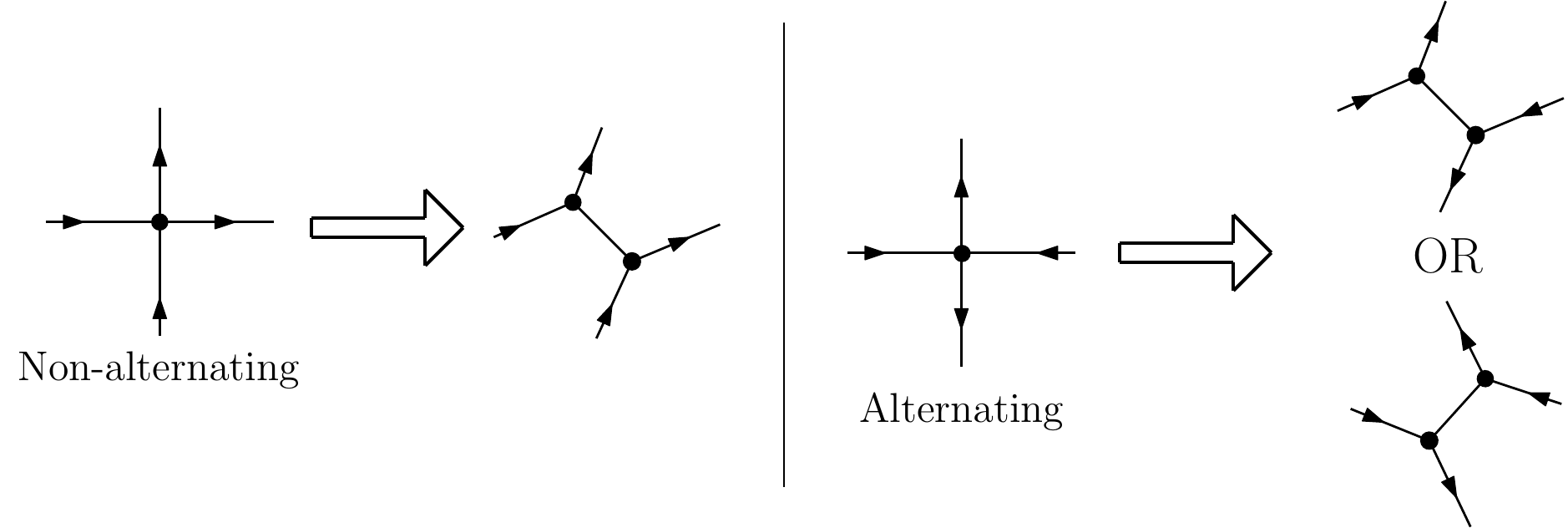} 
   \caption{Transforming degree 4 vertices to pairs of degree 3 vertices.}
   \label{fig:two vertex types transformations}
\end{figure}

\begin{figure}[ht]
\centering
   \includegraphics[scale=1]{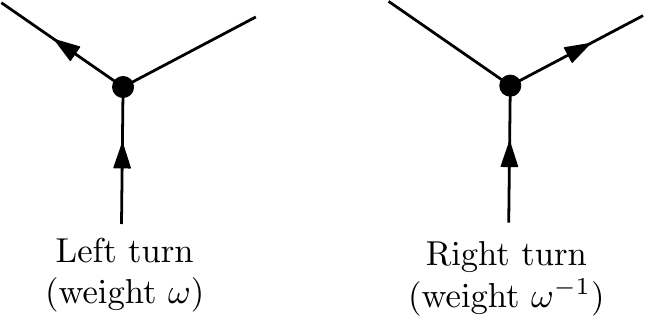} 
   \caption{The two vertex types allowed in the cubic partial Eulerian orientations.}
   \label{fig:two_A_vertex_types}
\end{figure}

In order to characterise the generating function $\Cgf(t,\omega)$ as the solution to a system of functional equations, we introduce two new series $\Wgf(x)\equiv\Wgf(t,\omega,x)$ and $\Hgf(x,y)\equiv\Hgf(t,\omega,x,y)$. Each of these series counts planar partial Eulerian orientations in which each non-root vertex is either a right-turn vertex or a left-turn vertex, the only difference being the weight and allowed types of the root vertex. As usual, each right turn vertex (resp. left turn vertex) is given weight $\omega^{-1}$ (resp. $\omega$) and each undirected edge is given weight $t$. For $\Wgf(x)$, the root vertex may only be adjacent to undirected edges, and the weight of this vertex is $x^{k}$, where $k$ is its degree. For $H(x,y)$ the root vertex has exactly one outgoing edge, which is the root edge, and exactly one incoming edge. Each incidence between an undirected edge and the root vertex is either on the {\em left} or {\em right} of these edges. The weight of the root vertex is $x^{j}y^{k}$, where $j$ is the number of incidences on the right of the two directed edges and $k$ is the number of incidences on the left of these two edges. We call maps counted by $\Wgf(x)$ {\em W-maps} and we  call maps counted by $\Hgf(x)$ {\em H-maps}.

The series $\Hgf(t,\omega,0,0)\equiv\Hgf(0,0)$ then counts H-maps in which the root vertex is incident to only two edges, one outgoing and one incoming. replacing these two edges with a single (root) edge yields a C-map, hence $\Hgf(t,\omega,0,0)\equiv\Hgf(0,0)=\Cgf(t,\omega)$

\begin{figure}[ht]
\centering
   \includegraphics[scale=1.2]{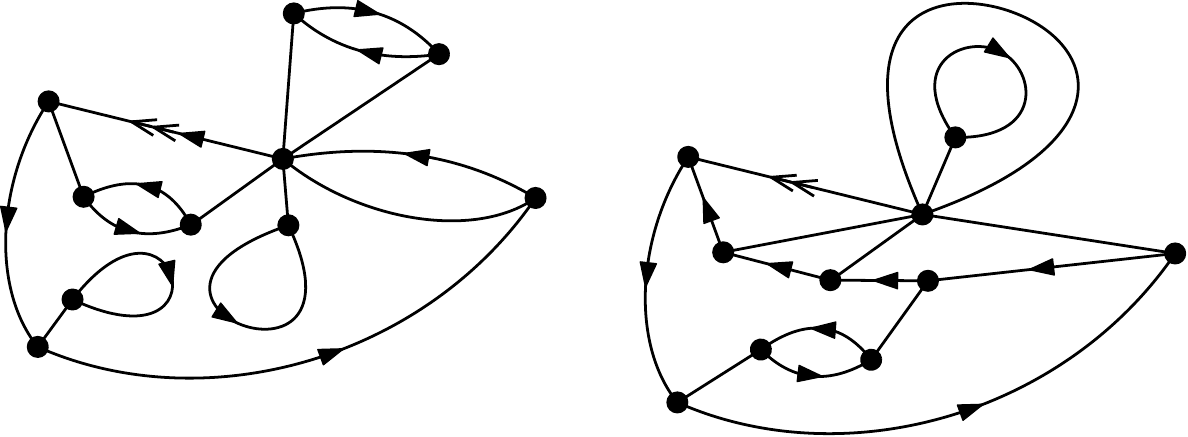} 
   \caption{Left: a map which contributes $\omega^{-3}x^{2}y^{3}t^{7}$ to $\Hgf(x,y)$. Right: a map which contributes $\omega^{3}x^{7}t^{8}$ to $\Wgf(x)$. The root edges are marked by double arrows.}
   \label{fig:HandW_examples}
\end{figure}

In forthcoming lemmas, we will show that these series are characterised by the following two equations:
\begin{equation}\label{wloopcombi}\Wgf(x)=x^2t\Wgf(x)^{2}+\omega xt\Hgf(x,0)+\omega^{-1}xt\Hgf(0,x)+1\end{equation}
\begin{equation}\label{hloopcombi}\Hgf(x,y)=\Wgf(x)\Wgf(y)+\frac{\omega^{-1}}{y}\left(\Hgf(x,y)-\Hgf(x,0)\right)+\frac{\omega}{x}\left(\Hgf(x,y)-\Hgf(0,y)\right).\end{equation}

In particular, the main series of interest $\Cgf(t,\omega)$ is related to these by the equation
\[\Cgf(t,\omega)=\Hgf(0,0)=\frac{[x^1]\Wgf(x)}{t(\omega+\omega^{-1})}.\]

\begin{Lemma}\label{lem:uniqueness} Equations \eqref{wloopcombi} and \eqref{hloopcombi} define unique series $\Hgf(x,y)$ and $\Wgf(x)$ in $\mathbb{C}[x,y][[t]]$ and $\mathbb{C}[x][[t]]$, respectively.\end{Lemma}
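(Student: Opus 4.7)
The plan is to proceed by induction on the $t$-adic order. Write $\Wgf(x)=\sum_{n\ge 0}w_n(x)\,t^n$ with $w_n\in\mathbb{C}[x]$ and $\Hgf(x,y)=\sum_{n\ge 0}h_n(x,y)\,t^n$ with $h_n\in\mathbb{C}[x,y]$. Equation \eqref{wloopcombi} carries an explicit factor of $t$, so at the coefficient of $t^n$ it reads
\[
w_n(x)=\delta_{n,0}+x^2\!\!\sum_{i+j=n-1}\!\!w_i(x)w_j(x)+\omega x\, h_{n-1}(x,0)+\omega^{-1}x\, h_{n-1}(0,x),
\]
which directly expresses $w_n$ in terms of lower-order data, giving in particular $w_0(x)=1$ as base case. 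Equation \eqref{hloopcombi} carries no explicit $t$, so at the coefficient of $t^n$ it becomes an \emph{implicit} relation for $h_n$ involving only the $w_i$ with $i\le n$:
\[
h_n(x,y)=\sum_{i+j=n}w_i(x)w_j(y) + \frac{\omega^{-1}}{y}\bigl(h_n(x,y)-h_n(x,0)\bigr) + \frac{\omega}{x}\bigl(h_n(x,y)-h_n(0,y)\bigr).
\]
The induction would then alternate $w_n\mapsto h_n\mapsto w_{n+1}\mapsto\cdots$.

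The crux of the argument is to show that, given $w_0,\dots,w_n\in\mathbb{C}[x]$, there is at most one polynomial $h_n\in\mathbb{C}[x,y]$ solving the implicit equation above. I plan to run a kernel-method argument. The difference $\delta=h_n-h_n'$ of two candidate solutions is a polynomial in $x,y$ which, after multiplying by $xy$, satisfies the homogeneous relation
\[
\delta(x,y)(xy-\omega^{-1}x-\omega y)=-\omega^{-1}x\,\delta(x,0)-\omega y\,\delta(0,y).
\]
Writing $\delta=\sum a_{j,k}x^jy^k$ and extracting the coefficient of $x^jy^k$ for $j,k\ge 1$ (the boundary cases $j=0$ or $k=0$ reduce to tautologies), one obtains the triangular recursion
\[
a_{j,m}=\omega^{-1}a_{j-1,m}-\omega^{-2}a_{j-1,m+1}\qquad(j\ge 1,\ m\ge 0),
\]
expressing row $j$ of $\delta$ entirely in terms of row $j-1$.

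The main obstacle I anticipate is closing the argument by exploiting the \emph{finite} bidegree of $\delta$. Let $J$ be the $x$-degree of $\delta$; since $a_{J+1,m}=0$ for all $m$, the recursion at $j=J+1$ rearranges to $a_{J,m+1}=\omega\,a_{J,m}$, forcing $a_{J,m}=\omega^m a_{J,0}$. Since $\delta$ also has finite $y$-degree and $\omega\ne 0$, this geometric sequence can vanish for large $m$ only if $a_{J,0}=0$, whence the entire row $a_{J,\cdot}$ is zero; a descending induction on $J$ collapses $\delta$ to $0$. This polynomiality step is essential: if $\mathbb{C}[x,y]$ were replaced by $\mathbb{C}[[x,y]]$, the homogeneous equation would admit spurious solutions with $a_{0,m}=\omega^m c$. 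Existence of the series, if needed here, is supplied by the combinatorial interpretation introduced just before the lemma: there are finitely many $W$-maps and $H$-maps at each order of $t$ with bounded root-vertex degree, so the generating functions are polynomial-valued at each order and patently solve \eqref{wloopcombi}--\eqref{hloopcombi}.
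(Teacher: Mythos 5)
Your proof is correct, and its skeleton is the same as the paper's: induct on the power of $t$, note that $w_n$ is forced because every occurrence of $\Wgf$ and $\Hgf$ on the right of \eqref{wloopcombi} carries an explicit factor of $t$, and concentrate the real work in showing that \eqref{hloopcombi} determines $h_n$ uniquely among polynomials. Where you genuinely diverge is in how that last step is carried out. The paper takes the difference $\delta$ of two candidate solutions at order $t^k$, picks a monomial $x^ry^s$ with $[x^ry^s]\delta\ne 0$ and $r+s$ maximal, and observes that the coefficient of $x^ry^s$ on the right-hand side of \eqref{hloopcombi} involves only the coefficients of $\Hgf$ at $x^ry^{s+1}$ and $x^{r+1}y^s$, i.e.\ at total degree $r+s+1$, which agree by maximality; hence $[x^ry^s]\delta=0$, a contradiction, in two lines. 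You instead clear denominators, extract a row-by-row recursion in the $x$-degree, and close with the geometric-sequence argument at the top row; I checked the recursion $a_{j,m}=\omega^{-1}a_{j-1,m}-\omega^{-2}a_{j-1,m+1}$ and the boundary tautologies, and they are right. Both arguments ultimately rest on the same point, the finiteness of the bidegree of $\delta$, and your explicit remark that uniqueness fails over $\mathbb{C}[[x,y]]$ (where $a_{0,m}=\omega^m c$ yields spurious solutions) makes visible a subtlety the paper leaves implicit. The paper's version is shorter; yours exposes the kernel structure and the precise role of polynomiality, at the cost of more bookkeeping. Your closing remark on existence via the combinatorial interpretation also matches the paper's architecture, where existence is supplied separately by the combinatorial lemmas following this one.
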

\begin{proof}
Suppose the contrary and let $\Wgf_{1}(x),\Hgf_{1}(x,y)$ and $\Wgf_{2}(x),\Hgf_{2}(x,y)$ be distinct pairs of series which solve the equations Then let $k$ be minimal such that either
\[[t^{k}]\Wgf_{1}(x)\neq [t^{k}]\Wgf_{2}(x)~~~~~~\text{ or }~~~~~~[t^{k}]\Hgf_{1}(x,y)\neq [t^{k}]\Hgf_{2}(x,y).\]
For $j<k$, we have $[t^{j}]\Wgf_{1}(x)=[t^{j}] \Wgf_{2}(x)$ and $[t^{j}]\Hgf_{1}(x,y)=[t^{j}]\Hgf_{2}(x,y)$, from which it follows from \eqref{wloopcombi} that $[t^{k}]\Wgf_{1}(x)=[t^{k}]\Wgf_{2}(x)$. Hence, $[t^{k}]\Hgf_{1}(x,y)\neq [t^{k}]\Hgf_{2}(x,y)$. Let $r,s\in\mathbb{C}$ satisfy $[t^{k}x^{r}y^{s}](\Hgf_{1}(x,y)-\Hgf_{2}(x,y))\neq0$ such that $r+s$ is maximal. Then the $[t^{k}x^{r}y^{s}]$ coefficient of the right hand side of \eqref{hloopcombi} is equal for both solutions $\Wgf_{1}(x),\Hgf_{1}(x,y)$ and $\Wgf_{2}(x),\Hgf_{2}(x,y)$, so $[t^{k}x^{r}y^{s}](\Hgf_{1}(x,y)-\Hgf_{2}(x,y))=0$, a contradiction.
\end{proof}

\begin{figure}[ht]
\centering
   \includegraphics[scale=1.2]{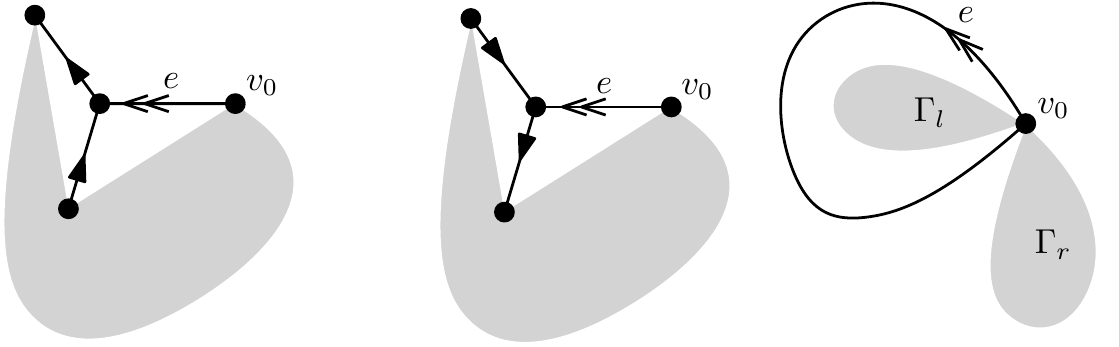} 
   \caption{The three types of W-maps apart from the atomic map used in the proof of Lemma \ref{lem:Wequation}. }
   \label{fig:Wproofcases}
\end{figure}

Now we will give combinatorial proofs of \eqref{wloopcombi} and \eqref{hloopcombi}.
\begin{Lemma}\label{lem:Wequation}The series $\Hgf(x,y)$ and $\Wgf(x)$ satisfy the equation
\[\Wgf(x)=x^2t\Wgf(x)^{2}+\omega xt\Hgf(x,0)+\omega^{-1}xt\Hgf(0,x)+1.\]\end{Lemma}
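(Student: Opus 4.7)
The plan is to partition the W-maps according to the structure of their root edge and check that the resulting four classes correspond bijectively (with matching weights) to the four terms on the right-hand side. The constant term $1$ accounts for the atomic W-map consisting of a single vertex and no edges. For every non-atomic W-map the root edge $e$ exists and, since the root vertex $v_0$ has only undirected edges, $e$ is undirected. Three mutually exclusive cases then arise, illustrated in Figure~\ref{fig:Wproofcases}: (i) $e$ is a loop at $v_0$; (ii) $e$ connects $v_0$ to a left-turn vertex $v_1$; (iii) $e$ connects $v_0$ to a right-turn vertex $v_1$.

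For case~(i), I would invoke the standard Tutte root-loop decomposition: cutting the loop and splitting $v_0$ into two boundary vertices produces an ordered pair of sub-W-maps, one inside and one outside the original loop (with the root edges of the two pieces fixed by the obvious convention on edges of $v_0$ adjacent to the loop). The loop itself contributes weight $t$, its two incidences at $v_0$ contribute $x^2$, and the two sub-maps contribute $\Wgf(x)^2$, giving the term $x^2 t\,\Wgf(x)^2$.

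For case~(ii), the approach is to contract the root edge $e$, merging $v_0$ and $v_1$ into one vertex. This merged vertex inherits the incoming and outgoing edges of $v_1$ together with the $k-1$ remaining undirected edges of $v_0$ (where $k$ is the degree of $v_0$); re-rooting so that the new root edge is the outgoing edge of $v_1$ turns the result into an H-map. The key geometric observation is that the cyclic order of edges at a left-turn vertex forces all the transferred undirected edges to land on one specific side of the two directed edges; by the conventions fixed in Figures~\ref{fig:two_A_vertex_types} and \ref{fig:HandW_examples} this is the ``right'' side, so the merged root vertex contributes $x^{k-1}$ inside $\Hgf(x,0)$. Multiplying by the vertex weight $\omega$ of $v_1$, the edge weight $t$ of $e$, and one extra factor of $x$ to restore the full $x^k$ weight of $v_0$'s original incidences (one of which was absorbed by the contraction) gives the term $\omega x t\,\Hgf(x,0)$. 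Case~(iii) is strictly analogous; the right-turn cyclic order at $v_1$ now places the transferred undirected edges on the ``left'' side instead, producing $\omega^{-1} x t\,\Hgf(0,x)$.

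The main point requiring care is the side convention: one must verify, directly from Figure~\ref{fig:two_A_vertex_types} and the definition of $\Hgf(x,y)$, that after the contraction the $k-1$ undirected edges of $v_0$ appear on the right of the two directed edges precisely when $v_1$ is a left-turn vertex, and on the left precisely when $v_1$ is a right-turn vertex. Once this orientation check is done the four cases are manifestly disjoint and exhaustive, and summing their weighted contributions yields the stated equation.
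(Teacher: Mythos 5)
Your proposal is correct and follows essentially the same decomposition as the paper's proof: the atomic map gives the constant $1$, the root-loop case splits into an ordered pair of W-maps giving $x^2t\,\Wgf(x)^2$, and contracting the root edge when its other endpoint is a left-turn (resp.\ right-turn) vertex yields an H-map contributing $\omega xt\,\Hgf(x,0)$ (resp.\ $\omega^{-1}xt\,\Hgf(0,x)$). Your careful check of which side of the directed edges the remaining undirected incidences fall on after contraction is a detail the paper asserts without elaboration, but the underlying argument is identical.
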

\begin{proof}
Consider maps $\Gamma$ which contribute to $\Wgf(x)$. The situation where $\Gamma$ is a single vertex contributes $1$ to $\Wgf(x)$. Otherwise, we will consider three cases, illustrated in Figure \ref{fig:Wproofcases}. In each case, let $v_{0}$ be the root vertex and let $e$ be the root edge. In the first case,  the other end of the root edge is a left-turn vertex. Contracting the root edge yields a H-map in which the root vertex $v_{0}$ has no incidences on the left of $e$, that is, map which contributes to $\Hgf(x,0)$. Hence this case contributes $\omega xt\Hgf(x,0)$. Similarly, the case where the other end of the root vertex is a right turn vertex contributes $\omega^{-1} xt\Hgf(0,x)$. In the remaining case, where both ends of the root edge are attached to the root vertex, the map splits into two pieces $\Gamma_{r}$ (right of the root edge), and $\Gamma_{l}$ (left of the root edge). The two maps $\Gamma_{l}$ and $\Gamma_{r}$ can be any pair of maps counted by $\Wgf(x)$, Hence this case contributes $x^2t\Wgf(x)^2$.

Adding the contributions from all four cases yields the desired equation
\end{proof}
\begin{figure}[ht]
\centering
   \includegraphics[scale=1.2]{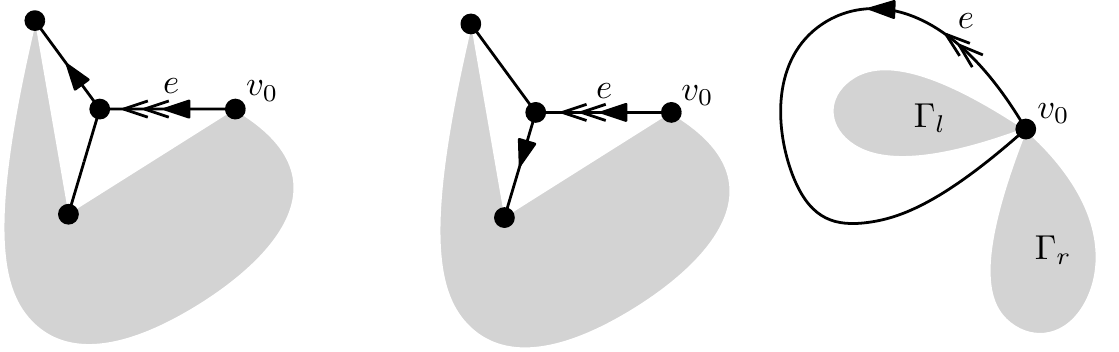} 
   \caption{The three types of H-maps used in the proof of Lemma \ref{lem:Hequation}. }
   \label{fig:Hproofcases}
\end{figure}

\begin{Lemma}\label{lem:Hequation}The series $\Hgf(x,y)$ and $\Wgf(x)$ satisfy the equation
\[\Hgf(x,y)=\Wgf(x)\Wgf(y)+\frac{\omega^{-1}}{y}\left(\Hgf(x,y)-\Hgf(x,0)\right)+\frac{\omega}{x}\left(\Hgf(x,y)-\Hgf(0,y)\right).\]\end{Lemma}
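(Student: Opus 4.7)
My plan is to prove this equation by a Tutte-style case analysis on the H-map $\Gamma$, paralleling the proof of Lemma \ref{lem:Wequation}. Write $v_0$ for the root vertex and $e$ for the outgoing root edge. Since $v_0$ has exactly one outgoing and one incoming directed edge, the other endpoint of $e$ is either $v_0$ itself (so $e$ is a directed loop that simultaneously serves as $v_0$'s incoming edge) or a distinct vertex $v_1$ which is either a left-turn or a right-turn vertex. These three alternatives will match the three terms on the right-hand side.

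In the loop case, the loop at $v_0$ divides the plane into two regions; the right-incidences of $v_0$ (counted by $x$) sit in one region and its left-incidences (counted by $y$) in the other. Each region, together with $v_0$, forms an independent $W$-map rooted at $v_0$, and since the directed loop itself carries no weight the total contribution is $\Wgf(x)\Wgf(y)$.

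When $v_1 \neq v_0$, I would contract the edge $e$, merging $v_0$ and $v_1$ into a single vertex $\bar v_0$ that becomes the root of a new H-map $\Gamma'$, with the outgoing edge of $v_1$ playing the role of the new root edge. The crucial combinatorial input is the planar splicing of the cyclic orders at $v_0$ and $v_1$: when $v_1$ is a right-turn vertex, the undirected edge of $v_1$ slots into the cyclic order of $\bar v_0$ as the new final left-incidence, so an H-map $\Gamma$ with $(j,k)$ right/left incidences at $v_0$ becomes an H-map $\Gamma'$ with $(j,k+1)$ incidences at $\bar v_0$. The operation is reversible: from any $\Gamma'$ with at least one left-incidence one recovers $v_1$ and $e$ by splitting off the final left-incidence as the undirected edge of a new right-turn vertex. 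Since the contraction absorbs the weight $\omega^{-1}$ of $v_1$, this bijection translates into a contribution of $\frac{\omega^{-1}}{y}\bigl(\Hgf(x,y)-\Hgf(x,0)\bigr)$. The left-turn subcase is the mirror image, producing $\frac{\omega}{x}\bigl(\Hgf(x,y)-\Hgf(0,y)\bigr)$.

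The delicate step will be the cyclic-order bookkeeping during contraction: using the convention of Figure \ref{fig:two_A_vertex_types} that distinguishes right- from left-turn vertices, one needs to verify that the undirected edge of $v_1$ really does land on the claimed side of $\bar v_0$ (left when $v_1$ is right-turn, right when $v_1$ is left-turn), as this asymmetry is precisely what produces the $\omega^{-1}/y$ versus $\omega/x$ structure of the equation. Once these bijections are checked to be weight-preserving, summing the three cases yields the identity.
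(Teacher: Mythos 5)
Your proof is correct and follows essentially the same route as the paper: the same three-way case split on the far end of the root edge (a directed loop at $v_0$ giving $\Wgf(x)\Wgf(y)$, versus a right-turn or left-turn vertex $v_1$ that gets contracted into the root, with the reverse operation splitting off the incidence adjacent to the root edge), so nothing further is needed. If anything, your left/right bookkeeping is the internally consistent version — the paper's own prose labels both non-loop cases as ``right-turn'' and pairs the sides with the terms inconsistently, which are evidently typos — and your explicit flag that the side of the spliced-in undirected edge must be checked against the convention of Figure \ref{fig:two_A_vertex_types} is exactly the right point of care.
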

\begin{proof}
Consider maps $\Gamma$ which contribute to $\Hgf(x,y)$. We will consider three cases, illustrated in Figure \ref{fig:Hproofcases}. In each case, let $v_{0}$ be the root vertex and let $e$ be the root edge. In the first case, the other end of the root edge is a right-turn vertex. Contracting the root edge yield a H-map $\Gamma_{1}$ in which the root vertex has one more incidence on the left of $e$ than in $\Gamma$. The map $\Gamma
_{1}$ can be any map contributing to $H$ with at least one incidence to the root vertex on the left of $e$ and such a map $\Gamma_{1}$ uniquely determines $\Gamma$. Hence this case contributes $\frac{\omega}{x}(\Hgf(x,y)-\Hgf(0,y))$ to $\Hgf(x,y)$. Similarly, the second case, in which the other end of the root vertex is a right-turn vertex contributes $\frac{\omega^{-1}}{y}(\Hgf(x,y)-\Hgf(x,0))$. 

In the remaining case, where both ends of the root edge are attached to the root vertex, the map splits into two W-maps $\Gamma_{r}$ (right of the root edge), and $\Gamma_{l}$ (left of the root edge). If $k_{l}$ and $k_{r}$ are the degrees of the root vertex in $\Gamma_{l}$ and $\Gamma_{r}$, respectively, then the weight of the root vertex of $\Gamma$ is $x^{k_{r}}y^{k_{l}}$, hence this case contributes $\Wgf(x)\Wgf(y)$.

Adding the contributions from all four cases yields the desired equation.
\end{proof}

\section{Derivation of Theorem \ref{thm:allgamma} making the one-cut assumption}\label{sec:derivation}
In this section we solve the equations \eqref{wloopcombi} and \eqref{hloopcombi} derived in the previous section while making an assumption known as the ``one cut assumption'' so it should not be seen as rigorous. Although we believe that it is possible to prove this assumption directly, it is simpler to prove that our solution satisfies equations \eqref{wloopcombi} and \eqref{hloopcombi}, and is therefore the unique solution described by Lemma \ref{lem:uniqueness}. We give this proof in the following section. This section closely follows the method Kostov \cite{Kostov-6v} but we fix a mistake in one of his integrals.

As a first step, we fix $\omega\in\mathbb{C}$ to satisfy $|\omega|=1$ and $t\in\mathbb{R}$ to be positive but small ($t\in[0,\frac{1}{64}]$ is sufficient). We define four complex analytics functions $\W(x)$, $\G(x)$, $\bG(x)$ and $\H(x,y)$ of $x$ (and $y$) as follows:
\begin{align*}\W(x)&=\frac{1}{x}\Wgf\left(\frac{1}{x}\right)&\H(x,y)&=\frac{1}{xy}\Hgf\left(\frac{1}{x},\frac{1}{y}\right)\\
\G(x)&=\frac{1}{x}\Hgf\left(\frac{1}{x},0\right)&
\bG(x)&=\frac{1}{x}\Hgf\left(0,\frac{1}{x}\right).
\end{align*}
In particular, using simple bounds, these can be seen to converge for sufficiently large $x$ and $y$. In terms of these functions, equations \eqref{wloopcombi} and \eqref{hloopcombi} become:
\begin{align}
\label{eq:loop1c}
0&=\W(x)^2-t^{-1}(x \W(x)-1)+\omega \G(x)+\omega^{-1}\bG(x),
\\\label{eq:loop2c}
0&=\W(x)\W(y)+\H(x,y)(-1+\omega x+\omega^{-1}y)-\omega\bG(y)-\omega^{-1}\G(x).
\end{align}


\textbf{Assumption (the ``one-cut assumption''):} The
function $\W(x)$ is analytic in $x$ except on a single cut $[x_1,
x_2]$ on the positive real line. Note that $x_1,x_2$ depend on $t$, but the dependence is suppressed in the notation.

We apply this assumption immediately
by taking the difference of \eqref{eq:loop1c} at $x\pm i0$, $x\in(x_1,x_2)$:
\begin{multline}\label{eq:loop1d}
0=(\W(x+i0)-\W(x-i0))(\W(x+i0)+\W(x-i0)-t^{-1}x)
\\+\omega(\G(x+i0)-\G(x-i0))+\omega^{-1}(\bG(x+i0)-\bG(x-i0))
\end{multline}
Similarly, by setting $y=\omega(1-\omega x)$ in \eqref{eq:loop2c} 
we find:
\begin{gather}
(\W(x+i0)-\W(x-i0))\W(\omega-\omega^{2}x)=\omega^{-1}(\G(x+i0)-\G(x-i0))\label{eq:Gcut}
\\
\intertext{and by exchanging to the roles of $x$ and $y$,}
(\W(x+i0)-\W(x-i0))\W(\omega^{-1}-\omega^{-2}x)=\omega(\bG(x+i0)-\bG(x-i0))\label{eq:bGcut}
\end{gather}
Substituting back into \eqref{eq:loop1d} and
dividing by $\W(x+i0)-\W(x-i0)$, we obtain, for all $x\in(x_1,x_2)$:
\begin{equation}\label{eq:spe}
0=\W(x+i0)+\W(x-i0)-\frac{x}{t}+\omega^{-2}\W(\omega^{-1}-\omega^{-2}x)+\omega^2\W(\omega-\omega^2x)
\end{equation}

Finally we have arrived at an equation considered by Kostov \cite{Kostov-6v}. The relation between our deduction of this equation and Kostov's deduction is described in more detail in Appendix \ref{sec:matrix}. Indeed, this equation had been deduced previously \cite{artic10}, Kostov's major contribution was a solution to this equation. Following Kostov, we consider the function
\begin{align}\begin{split}
U(x):=x\omega \W\left(\frac{1}{\omega+\omega^{-1}}+i\omega x\right)&+x\omega^{-1}\W\left(\frac{1}{\omega+\omega^{-1}}-i\omega^{-1}x\right)\label{eq:Udef}\\
&+\frac{ix^2}{t(\omega^2-\omega^{-2})}  -\frac{x}{t(\omega+\omega^{-1})^2},
\end{split}\end{align}
which is holomorphic in
$\mathbb{C}$ minus the two cuts $( i\omega)^{\pm 1}[x'_1,x'_2]$, where
$x_i'$ is a translate of $x_i$ by an explicit real constant. The reason for considering this function $U(x)$ is that \eqref{eq:spe} takes the following simple form in terms of $U$:
\begin{equation}\label{eq:glue}
U(i\omega(x\pm i0))=U(-i\omega^{-1}(x\mp i0)),
\qquad
x\in (x'_1,x'_2).
\end{equation}
The expansion $\W(x)=x^{-1}+O(x^{-2})$ at $x\to\infty$, yields the following two conditions
\begin{equation}\label{eq:exp}
U(x)=\frac{i}{t(\omega^2-\omega^{-2})} x^2 - \frac{1}{t(\omega+\omega^{-1})^2}x
+O(1/x) \qquad \hbox{as }
x\rightarrow \infty,
\end{equation}
\begin{equation}\label{eq:norm}
\oint_{\mathcal C} \frac{dx}{2\pi x} U(x)=1 ,
\end{equation}
where $\mathcal C$ surrounds the cut $(i\omega)^{-1}[x'_1,x'_2]$ anticlockwise. As we will see, \eqref{eq:glue}, \eqref{eq:exp} and \eqref{eq:norm} contain enough information to determine $U(x)$ exactly.
 
Note that by expanding $U(x)$ at infinity further than \eqref{eq:exp}, i.e., $U(x)=\sum_{i=-2}^\infty U_i x^{-i}$,
we can extract from $U(x)$ the same information as from $\W(x)$. In particular,
\begin{equation}\label{eq:U1}
U_1=
1-(\omega+\omega^{-1})[x^{-2}]\W(x)=1-t(\omega+\omega^{-1})^2\left(1+\Qgf\!\left(t,\omega^{2}+\omega^{-2}\right)\right).
\end{equation}

\subsection{Solution in terms of theta functions}
We now provide a parametric expression for $U(x)$, following \cite{Kostov-6v}.
This expression will involved the classical Jacobi theta function $\theta_1$, which we denote by $\th$:
\begin{align}\label{theta-def}
\th(z)\equiv\th(z,q)&= 2\sin(z) q^{1/8} \prod_{n=1}^{\infty} (1-2\cos(2z)q^n+q^{2n})(1-q^n),\\
=\th(z|\tau)&= -i \sum_{n\in \zs} (-1)^n e^{(n+1/2)^2\pi i\tau+(2n+1)iz},
\end{align}
where $q=e^{2\pi i \tau}$ and $\tau$ has positive imaginary
  part.
  
We will first parametrise the domain $\mathbb{C}\cup\{\infty\}\setminus (i\omega[x'_1,x'_2]\cup -i\omega^{-1}[x'_1,x'_2])$ on which $U(x)$ is meromorphic. To do this, we use the classical result (see for example \cite[Chapter 5, Section 1]{Goluzin1969geometric} for an equivalent statement with ``cylinder" replaced by ``annulus").
\begin{Theorem}
Any doubly connected domain other than the puctured disk and punctured plane is conformally equivalent to some cylinder. 
\end{Theorem}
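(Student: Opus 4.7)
The strategy is the classical one for uniformization of doubly connected planar domains: solve a Dirichlet problem, pass to a harmonic conjugate, and exponentiate. Let $D$ be the given domain, embedded in $\hat{\mathbb{C}}$, and let $K_0, K_1$ denote the two components of $\hat{\mathbb{C}} \setminus D$. The excluded cases (punctured disc and punctured plane) are precisely those in which one of $K_0, K_1$ is a single point; in all other situations both $K_i$ are non-degenerate continua, hence of positive logarithmic capacity and regular for the Dirichlet problem at every accessible boundary point.

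The first step is to construct a harmonic function $u: D \to (0,1)$ with boundary values $0$ on $\partial K_0$ and $1$ on $\partial K_1$, using the Perron method; if one wishes to avoid boundary-regularity subtleties, one can exhaust $D$ from the inside by smoothly bounded doubly connected subdomains $D_n$ and pass to a limit by the maximum principle. Because $D$ has first Betti number one, the local harmonic conjugate $v$ of $u$ is globally defined modulo a single real period $P$; computing this period as the flux of $\nabla u$ across any cycle $\gamma$ separating $K_0$ from $K_1$, and using that $u$ has no interior extrema, shows $P > 0$. Then
\[
F(z) = \exp\!\bigl((2\pi/P)\,(u(z) + i v(z))\bigr)
\]
is a single-valued holomorphic map from $D$ into the annulus $A = \{w \in \mathbb{C} : 1 < |w| < e^{2\pi/P}\}$, and $A$ is conformally equivalent to a cylinder via the logarithm.

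The main obstacle is showing that $F$ is bijective. The cleanest route is to prove first that $u$ has no critical points in $D$. One can do this by applying the argument principle to the holomorphic function $\partial u/\partial z$ on the chain formed by two level sets $\{u = \varepsilon\}$ and $\{u = 1-\varepsilon\}$; the resulting count is a topological invariant depending only on the Euler characteristic of $D$, and it vanishes for a doubly connected region. Once $u$ is known to be critical-point-free, every level set $\{u = c\}$ with $c \in (0,1)$ is a smooth Jordan curve generating $H_1(D,\mathbb{Z})$, along which $v$ increases by exactly $P$; hence $F$ maps $\{u = c\}$ bijectively onto the circle $\{|w| = e^{2\pi c/P}\}$. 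Sweeping $c$ through $(0,1)$ yields both injectivity and surjectivity of $F: D \to A$, from which the stated conformal equivalence with a cylinder follows.
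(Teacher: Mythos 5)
The paper does not prove this statement at all: it is quoted as a classical result and delegated to Goluzin (Chapter~5, Section~1 of \emph{Geometric Theory of Functions of a Complex Variable}), so there is no internal argument to compare against. What you have written is the standard harmonic-measure proof of the uniformization of doubly connected domains, and as a proof sketch it is sound: the excluded cases are correctly characterized as those where a complementary component degenerates to a point; the map $\exp\bigl((2\pi/P)(u+iv)\bigr)$ is the right one; and the two genuinely delicate points --- that the limit of the exhaustion is non-constant (this is exactly where positive capacity of both complementary continua is used, otherwise $u$ collapses to a constant and the construction produces the punctured disc case) and that the critical-point count for $\partial u/\partial z$ reduces to an Euler-characteristic computation giving zero --- are both flagged, though left at the level of ``classical.'' If you wanted to tighten it, those are the two places to spend effort: (i) a quantitative lower bound on the period $P_n$ for the exhausting domains, uniform in $n$, to rule out degeneration of the limit; and (ii) the bookkeeping in the argument principle when a level set $\{u=\varepsilon\}$ has several components, where the sum of turning numbers over the oriented boundary of $\{\varepsilon<u<1-\varepsilon\}$ equals its Euler characteristic. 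Your final step (each level circle maps to a round circle with degree $\tfrac{1}{P}\oint dv = 1$) correctly delivers bijectivity. In short: the paper buys the result with a citation; you reconstruct the classical Dirichlet-problem proof, correctly in outline.
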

In our case, this means that there is some $\tau\in i\mathbb{R}^{+}$ and some conformal mapping
\[x:\left\{z\in\mathbb{C}\left|\text{im}(z)\in \left(-\frac{\pi\tau}{2},\frac{\pi\tau}{2}\right)\right.\right\}/\pi\mathbb{Z}\to\mathbb{C}\cup\{\infty\}\setminus (i\omega[x'_1,x'_2]\cup -i\omega^{-1}[x'_1,x'_2]).\]
In particular, this allows us to define $V(z)=U(x(z))$ in this region. By symmetry $x(\overline{z})=\overline{x(z)}$, and we may assume that $x(0)=\infty$ and that $x$ sends the boundary $-\frac{\pi\tau}{2}+\mathbb{R}/\pi\mathbb{Z}$ to $-i\omega^{-1}[x'_1,x'_2]$. Then the relation $x(\overline{z})=\overline{X(z)}$ applied to the boundary implies that for any $r\in\mathbb{R}$, 
\[x\left(r+\frac{\pi\tau}{2}\right)=-\omega^2 x\left(r-\frac{\pi\tau}{2}\right)=e^{-2i\alpha} x\left(r-\frac{\pi\tau}{2}\right),\]
where $\alpha$ is chosen so that $\omega=ie^{-i\alpha}$, which gives $\gamma=\omega^{2}+\omega^{-2}=-2\cos(2\alpha)$. It follows from the identities
  \[
    \th(z+\pi)=-\th(z) \quad \hbox{and} \quad \th(z+\pi\tau)=-e^{-i\pi
      \tau -2i z}\th(z),
  \]
that the expression $x(z)\th(z)/\th(z+\alpha)$
must be an elliptic function of $z$, with $\pi$ and $\pi\tau$ as periods, which has at most one pole (at $z=-\alpha$). In fact this expression must be constant, as non-constant elliptic functions have at least 2 poles. Hence we can define $x$ (analytically extended to all of $\mathbb{C}$) as follows:
  
 Define the mapping $x:\mathbb{C}\to\mathbb{C}$ by
\[
x(z) = x_0 \frac{\th(z+\alpha|\tau)}{\th(z|\tau)}.
\]
The quantities $x_0$ and $\tau$ will
be determined later. Note that $x(z)$ is a meromorphic function whose poles form the lattice $\pi\tau \ZZ+\pi\ZZ$.

Now we determine $V(z)\equiv U(x(z))$. In this context, property~\eqref{eq:glue} is equivalent to the following for $r\in\mathbb{R}$:
\[V\left(r-\frac{\pi\tau}{2}\right)=V\left(r+\frac{\pi\tau}{2}\right).\]
In particular, this implies that $V(z)$ is elliptic with $\pi$ and $\pi\tau$ as periods. Moreover, the only singularity of $V(z)$ is a double pole at $z=0$, due to the double pole of $U(x)$ at $x=\infty$. Together, these properties uniquely define $V(z)$ as a linear transformation of the Weierstrass function: 
%
\[
V(z)\equiv U(x(z))=a+b\wp(z),
\qquad
\wp(z)=\frac{1}{z^2}+\sum_{(m,n)\ne (0,0)} \left(\frac{1}{(z+\pi(m+n\tau))^2}-\frac{1}{\pi^2(m+n\tau)^2}\right).
\]
The parameters $\tau,x_0,a,b$ can be determined by the expansion of $U$ at infinity \eqref{eq:exp} and
the normalization condition~\eqref{eq:norm}.
We note that we can also write $\wp(z)$ in terms of theta functions as follows:
\[\wp(z)=\frac{\th'(z)^2}{\th(z)^2}-\frac{\th''(z)}{\th(z)}+\frac{\th'''(0)}{3\th'(0)}.\]

The three terms of expansion~\eqref{eq:exp} provide three equations which determine $x_{0}$, $b$ and $a$ in terms of $\alpha$ and $\tau$. We ignore the equation coming from the constant term, since this only determines $a$, which plays no role in any further calculations. We are left with the equations:
\begin{equation*}
b=\frac{1}{16t}\frac{\cos(\alpha)}{\sin^{3}(\alpha)}\frac{\th(\alpha)^{2}}{\th'(\alpha)^{2}}
,\qquad
x_0=\frac{\cos\alpha}{2\sin\alpha}\frac{\th'(0)}{\th'(\alpha)}
\end{equation*}
The integral \eqref{eq:norm} can be computed; fixing a mistake in~\cite[App.~B.2]{Kostov-6v} results in
a massive simplification:
\begin{equation}\label{eq:t}
t=
  \frac{\cos\alpha}{64\sin^3\alpha}
\left(
-\frac{\th(\alpha|\tau)\th'''(\alpha|\tau)}{\th'(\alpha|\tau)^2}+\frac{\th''(\alpha|\tau)}{\th'(\alpha|\tau)}
\right).
\end{equation}
The last equation should be understood as an implicit equation for $\tau$ as a function of
$t$; if we want to return to formal power series, 
then it determines $q=e^{2\pi i\tau}$ uniquely once we require $t\sim q$ around $0$,
as claimed in Theorem~\ref{thm:allgamma}.

Finally, by expanding $U(x)$ one order further, one finds
\begin{equation}\label{eq:W1}
t U_1=
\frac{\cos^2\alpha}{96\sin^4\alpha}
\frac{\th(\alpha|\tau)^2}{\th'(\alpha|\tau)^2}
\left(-\frac{\th'''(\alpha|\tau)}{\th'(\alpha|\tau)}
+\frac{\th'''(0|\tau)}{\th'(0|\tau)}\right).
\end{equation}
Theorem \ref{thm:allgamma} then follows due to \eqref{eq:U1}, writing $\Rgf(t,\gamma)=t U_1$, where $\gamma=-2\cos(2\alpha)$.

\section{Proof of the Theorem \ref{thm:allgamma} by guess and check}
\label{sec:proof}
In our derivation of the result, we made an assumption known generally as the ``one cut assumption''. We believe that this could be proven directly as was done for a similar problem in \cite{BE-On}, Lemma 1.1. However, doing so would be very tedious and so we will prove our result in a different way. To be precise, in this section we show that the series $\Wgf$ and $\Hgf$ corresponding to the function $U(x)$ form the unique pair of series satisfying the conditions of Lemma~\ref{lem:uniqueness}, and therefore these are the series in question.

We start by defining the series $W^{(0)}$, $G^{(0)}$ and $\overline{G}^{(0)}$ in terms of $V(z):=a+b\wp(z)$, where $a$ and $b$ are defined as in the previous section. Although $a$ has not been calculated explicitly, we note that its value has no effect on $W^{(0)}(y)$, $G^{(0)}$ and $\overline{G}^{(0)}$ defined below. In the following expressions, we use $c=-\omega-\omega^{-1}$:
\[W^{(0)}(y):=-\frac{1}{2\pi}\int_{0}^{\pi}\frac{V(z-\frac{\pi\tau}{2})x'\left(z-\frac{\pi\tau}{2}\right)}{\left(y+c^{-1}-i\omega x(z-\frac{\pi\tau}{2})\right)x\left(z-\frac{\pi\tau}{2}\right)}dz.\]
\[\overline{G}^{(0)}(y):=-\frac{\omega^{-1}}{2\pi}\int_{0}^{\pi}\frac{V(z-\frac{\pi\tau}{2})x'\left(z-\frac{\pi\tau}{2}\right)}{(y+c^{-1}-i\omega x(z-\frac{\pi\tau}{2}))x\left(z-\frac{\pi\tau}{2}\right)}W^{(0)}\left(-c^{-1}-i\omega^{-1} x\left(z-\frac{\pi\tau}{2}\right)\right)dz,\]
and
\[G^{(0)}(y):=-\frac{\omega}{2\pi}\int_{0}^{\pi}\frac{V(z-\frac{\pi\tau}{2})x'\left(z-\frac{\pi\tau}{2}\right)}{(y+c^{-1}-i\omega x(z-\frac{\pi\tau}{2}))x\left(z-\frac{\pi\tau}{2}\right)}W^{(0)}\left(-c^{-1}-i\omega^3 x\left(z-\frac{\pi\tau}{2}\right)\right)dz.\]
As will become clear in the proofs of Lemmas~\ref{lem:Weq_check} and \ref{lem:cut_eqs_check} these formulae are specifically designed to agree with the definition \eqref{eq:Udef} of $U(x)$ and satisfy equations \eqref{eq:Gcut} and \eqref{eq:bGcut}. We then define
\begin{align*}
\Ggf(x)&:=\frac{1}{x}\G\left(\frac{1}{x}\right)&&\overline{\Ggf}(x):=\frac{1}{x}\bG\left(\frac{1}{x}\right)\\
\Wgf(x)&:=\frac{1}{x}\W\left(\frac{1}{x}\right)&&\Hgf(x,y):=\frac{xy\Wgf(x)\Wgf(y)-\omega x\Ggf(x)-\omega^{-1}y\overline{\Ggf}(y)}{xy-\omega x-\omega^{-1}y}.
\end{align*}
The main result of this section is that this $\Wgf(x)$ and $\Hgf(x,y)$ are the unique series defined in Lemma \ref{lem:uniqueness}. The proof breaks into three parts:
\begin{itemize}
\item $\Wgf(x)$ expands as a series in $\mathbb{C}[x][[t]]$ (Lemma \ref{lem:WGseries}),
\item $\Hgf(x,y)$ expands as a series in $\mathbb{C}[x,y][[t]]$ (Lemma \ref{lem:Hseries_check}),
\item $\Wgf(x)$ and $\Hgf(x,y)$ satisfy \eqref{wloopcombi} and \eqref{hloopcombi} (Lemma \ref{lem:W_H_eqs_check}).
\end{itemize}
We start by proving the equivalent statements for $\W(x)$, $\G(x)$, $\bG(x)$ and $\H(x,y)$.

\begin{Lemma}\label{lem:WGseries}
For $q$ sufficiently small, Each of the three functions $\W$, $\G$ and $\bG$ expands as a series as $y\to\infty$, and this series lies in $y^{-1}\mathbb{C}[y^{-1}][[q]]$. Equivalently, $\Wgf(x)$, $\Ggf(x)$ and $\overline{\Ggf}(x)$ are all series in $\mathbb{C}[x][[q]]\equiv\mathbb{C}[x][[t]]$.
\end{Lemma}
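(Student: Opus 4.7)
The plan is to read off both the existence of the asymptotic expansion and the formal power series structure of its coefficients directly from the integral representations.

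As a preliminary, I note that the implicit equation \eqref{eq:t} for $t$ as a function of $q$ has leading behaviour $t\sim q$, so it can be formally inverted to give $q\in t\mathbb{C}[[t]]$, and hence $\mathbb{C}[[t]]=\mathbb{C}[[q]]$ as rings of formal series. All the auxiliary constants $x_0,a,b$ and every theta-function value appearing in $V(z)=a+b\wp(z,\tau)$ therefore lie in $\mathbb{C}[[q]]$.

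For the existence of the expansion, I fix a sufficiently small $q>0$ and observe that on the contour $w=z-\pi\tau/2$, $z\in[0,\pi]$, the function $x(w)$ remains bounded. For $|y|$ large, I may expand the denominator $1/(y+c^{-1}-i\omega x(w))$ as a geometric series in $1/y$ and interchange with $\int_0^\pi dz$, producing $\W(y)=\sum_{k\ge 1}\ell_k(q)\,y^{-k}$ with
\[
\ell_k(q)=-\frac{1}{2\pi}\int_0^{\pi}V(w)\,(i\omega x(w)-c^{-1})^{k-1}\,\frac{x'(w)}{x(w)}\,dw,\qquad w=z-\tfrac{\pi\tau}{2},
\]
and analogous formulas for $\G,\bG$ (with an additional $\W$-factor inside the integrand).

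For the formal power series structure, I use that the integrand of $\ell_k$ is $\pi$-periodic in $z$: indeed $V$ is elliptic with periods $\pi,\pi\tau$, and $x(z+\pi)=x(z)$ from $\th(z+\pi)=-\th(z)$. Expanding the integrand as a Fourier series $\sum_{m\in\mathbb{Z}}g_m(q)e^{2imz}$, the integral $\int_0^\pi dz$ extracts the $m=0$ component $\pi g_0(q)$. The key observation is that in the expansion of each factor (obtained from the Jacobi series form of $\th(z-\pi\tau/2,q)$, noting $e^{-i\pi\tau/2}=q^{-1/4}$), half-integer powers of $q$ are always paired with non-zero Fourier frequencies in $z$: for an elliptic function $f$ of periods $\pi,\pi\tau$ this is immediate from $f(z-\pi\tau/2)=\sum_m f_m(q)q^{-m/2}e^{2imz}$, and for the quasi-periodic factor $x(w)^j$ arising in the binomial expansion of $(i\omega x(w)-c^{-1})^{k-1}$ it follows from tracking parity of $e^{iz}$-exponents and powers of $q^{1/4}$ in the Jacobi triple product. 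Consequently $g_0(q)\in\mathbb{C}[[q]]$, so $\ell_k(q)\in\mathbb{C}[[q]]$. The same argument applies to the integrals defining $\G$ and $\bG$; the additional $\W$-factor in their integrand is then already known to lie in $\mathbb{C}[[q]]$. The restatement for $\Wgf,\Ggf,\overline{\Ggf}\in\mathbb{C}[x][[q]]$ follows immediately from $\Wgf(x)=x^{-1}\W(1/x)$ and its analogues.

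The main obstacle is the parity/bookkeeping in the Fourier-coupling claim for the quasi-periodic factor $x(w)^j$. An alternative, more structural approach is to change variable $u=x(w)$ — using the $\pi$-periodicity of $x$ in $w$ to see that the image is a closed contour $\Gamma$ around the cut $K_1$ — to obtain a contour integral of $U(u)(i\omega u-c^{-1})^{k-1}\,du/u$, and then use the residue theorem together with the gluing identity \eqref{eq:glue} (followed by the substitution $u\mapsto-\omega^{-2}u$ identifying $K_1$ with $K_2$) to reduce $\ell_k$ to a finite $\mathbb{C}[[q]]$-linear combination of evaluations of $U$ at $0$ and $\infty$, bypassing the Fourier-coupling analysis entirely.
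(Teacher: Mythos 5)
Your overall strategy --- geometric expansion of the Cauchy kernel in $1/y$, term-by-term integration, and extraction of the constant Fourier mode in $z$ --- is the same as the paper's, and your care about half-integer powers of $q$ pairing only with non-zero Fourier frequencies is a legitimate refinement of a point the paper treats loosely. However, there is a genuine gap: you only prove that each coefficient $\ell_k(q)$ lies in $\mathbb{C}[[q]]$, which places $\W$ in $y^{-1}\mathbb{C}[[q]][[y^{-1}]]$, not in the claimed ring $y^{-1}\mathbb{C}[y^{-1}][[q]]$. The latter is strictly stronger: it requires $\mathrm{ord}_q\,\ell_k\to\infty$ as $k\to\infty$, so that each fixed power of $q$ sees only finitely many powers of $y^{-1}$. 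The paper obtains this from one crucial observation that is absent from your argument: the quantity $i\omega x(z-\tfrac{\pi\tau}{2})-c^{-1}$, which you raise to the power $k-1$ inside $\ell_k$, has \emph{no constant term} as a series in $q$. (Its $q\to 0$ limit is $i\omega\, x_0 e^{i\alpha}=-\tfrac{1}{2\sin\alpha}=c^{-1}$, so the constants cancel exactly.) This forces $\ell_k$ to be divisible by an increasing power of $q$, whence $[q^n]\W(y)$ is a polynomial in $y^{-1}$ of degree at most $n+1$.

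This is not a cosmetic point. The polynomiality in $x$ of the $t$-coefficients is exactly what Lemma~\ref{lem:uniqueness} requires, and it is also what makes the composition $W^{(0)}\bigl(-c^{-1}-i\omega^{3}x(z-\tfrac{\pi\tau}{2})\bigr)$ in the integrands for $\G$ and $\bG$ well-defined as a formal $q$-series: there the reciprocal of the argument has a \emph{non-zero} constant term ($ie^{-i\alpha}$), so substituting it into a mere element of $y^{-1}\mathbb{C}[[q]][[y^{-1}]]$ would produce infinite sums at each order in $q$. Your remark that ``the additional $\W$-factor is already known to lie in $\mathbb{C}[[q]]$'' therefore also leans on the missing polynomiality (or else on an unproven uniform-convergence claim). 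The fix is short --- record the cancellation $i\omega x(z-\tfrac{\pi\tau}{2})\to c^{-1}$ as $q\to 0$ and track the resulting $q$-divisibility of $\ell_k$ --- but as written the proof does not establish the stated membership in $y^{-1}\mathbb{C}[y^{-1}][[q]]$, nor its consequence $\Wgf(x),\Ggf(x),\overline{\Ggf}(x)\in\mathbb{C}[x][[t]]$. Your closing ``alternative'' contour-integral sketch addresses only the Fourier-parity issue and does not repair this gap either.
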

\begin{proof} We start by proving the result for $\W(y)$. In this case, the integrand can be written as
\[
\sum_{n=0}^{\infty}\frac{V(z-\frac{\pi\tau}{2})x'\left(z-\frac{\pi\tau}{2}\right)}{x\left(z-\frac{\pi\tau}{2}\right)} \left(i\omega x\left(z-\frac{\pi\tau}{2}\right)-c^{-1}\right)^{n}y^{-1-n},\]
which converges as long as $|y|>\left|i\omega x\left(z-\frac{\pi\tau}{2}\right)-c^{-1}\right|$. Since $x\left(z-\frac{\pi\tau}{2}\right)$ is finite for $z\in[0,\pi]$, it is uniformly bounded, so the series converges for sufficiently large $y$ independent of $z$. Now, each $y$ coefficient is a series in $q$ with coefficients polynomial in $e^{2iz}$ and $e^{-2iz}$, so the integral simply extracts the constant terms in each of these polynomial and multiplies it by $\pi$.  To see that this series lies in $y^{-1}\mathbb{C}[y^{-1}][[q]]$, it suffices to observe that the expression $i\omega x\left(z-\frac{\pi\tau}{2}\right)-c^{-1}$ expanded as a series in $q$ has no constant term. Indeed, this implies that the coefficient $[y^{-1-n}]\W(y)$ is divisible by $q$, or equivalently that the coefficient $[q^n]\W(y)$ is a polynomial in $y^{-1}$ with degree at most $n+1$.

Now we turn our attention to the functions $\G(y)$. The only extra element to take into account is the term $W^{(0)}\left(-c^{-1}-i\omega^3 x\left(z-\frac{\pi\tau}{2}\right)\right)$ in the integrand. We can directly expand this as a series in $q$ using our series for $\W(y)$. To see that this is a series in $\mathbb{C}[e^{iz},e^{-iz}][[q]]$, it suffices to show that $\left(-c^{-1}-i\omega^3 x\left(z-\frac{\pi\tau}{2}\right)\right)^{-1}$ is in the same ring. Indeed, this follows from the following two easily checked facts:
\begin{itemize}
\item The series $\left(-c^{-1}-i\omega^3 x\left(z-\frac{\pi\tau}{2}\right)\right)$ lies in $\mathbb{C}[e^{2iz},e^{-2iz}][[q]]$.
\item Its leading term as a series in $q$ is the constant $ie^{-i\alpha}$.
\end{itemize}
Since $W^{(0)}\left(-c^{-1}-i\omega^3 x\left(z-\frac{\pi\tau}{2}\right)\right)$ expands as a series in $\mathbb{C}[e^{iz},e^{-iz}][[q]]$, this series converges for sufficiently small $q$, independent of $z$. As in the case of $\W(y)$, it follows that the series for $\G(y)$ around $y\to\infty$ can be extract from the integral by deleting all terms $e^{2niz}$ for $z\neq 0$. This series $\G(y)$ is therefore an element of $y^{-1}\mathbb{C}[y^{-1}][[q]]$. The proof for $\bG(y)$ is essentially identical to the proof for $\G$ since the leading term of $\left(-c^{-1}-i\omega^{-1} x\left(z-\frac{\pi\tau}{2}\right)\right)$ as a series in $q$ is also a constant. 
\end{proof}

\begin{Lemma}\label{lem:one_cut_check}
The three functions $W^{(0)}(y)$, $G^{(0)}(y)$ and $\overline{G}^{(0)}(y)$ are analytic except on a cut $[y_{0},y_{1}]$ on the real axis.
\end{Lemma}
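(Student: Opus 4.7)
The plan is to identify the set of $y$ for which the integrand is singular and then apply standard results on analyticity of parameter-dependent integrals. The crucial observation is that in all three definitions the variable $y$ enters only through the denominator factor $y+c^{-1}-i\omega\, x(z-\tfrac{\pi\tau}{2})$; the remaining $z$-dependent pieces (including the $W^{(0)}$ factors appearing inside $G^{(0)}$ and $\overline{G}^{(0)}$) are independent of $y$. Writing $\omega=ie^{-i\alpha}$ and recalling from the previous section that $x$ maps the horizontal boundary $-\tfrac{\pi\tau}{2}+\mathbb{R}/\pi\ZZ$ bijectively onto $-i\omega^{-1}[x_1',x_2']=-e^{i\alpha}[x_1',x_2']$, a short computation gives $i\omega\, x(z-\tfrac{\pi\tau}{2})\in[x_1',x_2']$ for $z\in[0,\pi]$. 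Since $c^{-1}=-1/(2\sin\alpha)$ is real, the denominator vanishes (for some $z$) precisely when $y$ lies in the real interval $[y_0,y_1]:=[x_1'-c^{-1},x_2'-c^{-1}]$.

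For $W^{(0)}$ the integrand is jointly continuous on $[0,\pi]\times(\mathbb{C}\setminus[y_0,y_1])$, analytic in $y$ for each fixed $z$, and the denominator is uniformly bounded below on compact subsets of $\mathbb{C}\setminus[y_0,y_1]$. Standard arguments (differentiation under the integral sign, or Morera's theorem combined with Fubini) then yield analyticity of $W^{(0)}(y)$ on $\mathbb{C}\setminus[y_0,y_1]$. For $G^{(0)}$ and $\overline{G}^{(0)}$ the same strategy applies, provided the extra factor $W^{(0)}(-c^{-1}-i\omega^{\pm}x(z-\tfrac{\pi\tau}{2}))$ is a well-defined continuous function of $z\in[0,\pi]$. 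A calculation analogous to the one above shows that, as $z$ varies over $[0,\pi]$, these arguments trace the segments $-c^{-1}+e^{\mp 2i\alpha}[x_1',x_2']$, which leave the real axis at angle $\pm 2\alpha$ from the point $-c^{-1}$. Away from the degenerate cases $\alpha\in\tfrac{\pi}{2}\ZZ$ (i.e.\ $\gamma=\pm 2$), these segments meet the real axis only at $-c^{-1}$, and $-c^{-1}\notin[y_0,y_1]$ whenever $0\notin[x_1',x_2']$, which holds in the small-$t$ regime. Hence the arguments avoid the cut of $W^{(0)}$ just established, the integrand is continuous in $z$ and analytic in $y$, and the same Morera-type argument gives analyticity of $G^{(0)}$ and $\overline{G}^{(0)}$ on $\mathbb{C}\setminus[y_0,y_1]$.

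The main obstacle is this last geometric check: one must verify that the ``rotated cuts'' $-c^{-1}+e^{\mp 2i\alpha}[x_1',x_2']$, along which $W^{(0)}$ is evaluated inside the defining integrals for $G^{(0)}$ and $\overline{G}^{(0)}$, are disjoint from the cut $[y_0,y_1]$ of $W^{(0)}$ itself. This is essentially a self-consistency condition on the one-cut ansatz, and the degenerate values $\gamma=\pm 2$ (as well as any further colinearities that could occur for special rational $\alpha/\pi$) should be treated separately or recovered by continuity in $\gamma$.
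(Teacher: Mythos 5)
Your argument is correct and its skeleton matches the paper's: locate where the $y$-dependent denominator $y+c^{-1}-i\omega x(z-\tfrac{\pi\tau}{2})$ can vanish, observe that this forces $y$ into a bounded real interval, and deduce analyticity elsewhere. Two localized differences are worth recording. First, the paper derives reality of the singular set from the symmetry $i\omega x(z-\tfrac{\pi\tau}{2})\in\mathbb{R}$ and then invokes Lemma~\ref{lem:WGseries} (the convergent expansion at $y=\infty$) to conclude that the cut is \emph{finite}; you get finiteness for free by identifying the singular set with the compact image $[x_1',x_2']-c^{-1}$ of the boundary circle. Second, and more substantively, for the inner factors $W^{(0)}\bigl(-c^{-1}-i\omega^{\pm1}x(z-\tfrac{\pi\tau}{2})\bigr)$ appearing in $G^{(0)}$ and $\overline{G}^{(0)}$ the paper leans on the convergent $q$-series for these compositions established in Lemma~\ref{lem:WGseries}, whereas you argue geometrically that the arguments trace the rotated segments $-c^{-1}+e^{\mp2i\alpha}[x_1',x_2']$, which miss the real cut provided $0\notin[x_1',x_2']$ (true in the small-$t$ regime since the cut degenerates to $\{0\}$, i.e.\ $x_i'\to c^{-1}\neq0$, as $t\to0$). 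Your version is more self-contained at exactly the point the paper glosses over, and correctly isolates the one self-consistency condition that must be checked; just make sure you state the small-$t$ (equivalently small-$q$) hypothesis explicitly, since it is what rules out the degenerate intersections you flag at the end.
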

\begin{proof}
First, we will show that for $y\notin\mathbb{R}$, the integrand for $\W(y)$ is holomorphic for $z\in[0,\pi]$, as this will imply that $\W(y)$ is itself holomorphic for $y\notin\mathbb{R}$. The expression 
\[\frac{V(z-\frac{\pi\tau}{2})x'\left(z-\frac{\pi\tau}{2}\right)}{x\left(z-\frac{\pi\tau}{2}\right)}\]
has no poles for $z\in[0,\pi]$, so it suffices to show that $(y+c^{-1}-i\omega x(z-\frac{\pi\tau}{2}))$ has no roots in this range. From the definition of $x(z)$
\[x(z)=x_{0}\frac{\th(z+\alpha)}{\th(z)},\] it follows that for $z\in\mathbb{R}$,
\[\overline{x(z-\pi\tau)}=x(z+\pi\tau)=-\om^{2}x(z+\pi\tau),\]
so $i\omega x(z-\frac{\pi\tau}{2}))\in\mathbb{R}$. Hence $(y+c^{-1}-i\omega x(z-\frac{\pi\tau}{2}))$ can only be $0$ if $y\in\mathbb{R}$.

This shows that the integrand for $\W(y)$ has no poles when $y\in\mathbb{C}\setminus\mathbb{R}$, so $\W(y)$ is holomorphic in this region. it then follows that the integrands for $\G(y)$ and $\bG(y)$ have no poles for $y\in\mathbb{C}\setminus\mathbb{R}$, so $\G(y)$ and $\bG(y)$ are also holomorphic in this region.

Finally, from Lemma~\ref{lem:WGseries}, we know that $\W(y)$, $\G(y)$ and $\bG(y)$ have power series expansions for large $y$. This implies that the cut on the real axis for each of these functions is finite.
\end{proof}

\begin{Lemma}\label{lem:Weq_check}
The function $\W(y)$ satisfies \eqref{eq:Udef}:
\begin{align*}y\omega \W\left(\frac{1}{\omega+\omega^{-1}}+i\omega y\right)&+y\omega^{-1}\W\left(\frac{1}{\omega+\omega^{-1}}-i\omega^{-1}y\right)\\
&=U(y)-\frac{iy^2}{t(\omega^2-\omega^{-2})}  +\frac{y}{t(\omega+\omega^{-1})^2}\end{align*}
\end{Lemma}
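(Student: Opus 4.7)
The strategy is to substitute the integral definition of $\W^{(0)}$ into the left-hand side, combine the two terms via partial fractions, and then evaluate the resulting single contour integral by deformation in the Riemann sphere minus the cuts of $U$.

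Substituting $Y_1=-c^{-1}+i\omega y$ and $Y_2=-c^{-1}-i\omega^{-1}y$ in turn, the denominator factor $Y+c^{-1}-i\omega x(w)$ of the $\W^{(0)}$ integrand becomes $i\omega(y-x(w))$ and $-i\omega^{-1}(y+\omega^2 x(w))$ respectively. The prefactors $y\omega$ and $y\omega^{-1}$ cancel these constants, and the elementary identity $\tfrac{1}{y-x}-\tfrac{1}{y+\omega^2 x}=\tfrac{x(1+\omega^2)}{(y-x)(y+\omega^2 x)}$ collapses the sum into a single integral
\[
S(y):=y\omega\W^{(0)}(Y_1)+y\omega^{-1}\W^{(0)}(Y_2)=-\frac{y(1+\omega^2)}{2\pi i}\int_{\gamma_-}\frac{V(w)\,x'(w)\,dw}{(y-x(w))(y+\omega^2 x(w))},
\]
where $\gamma_-=\{r-\pi\tau/2:r\in[0,\pi]\}$. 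Under the change of variable $\xi=x(w)$ (which sends $\gamma_-$ to a loop $\Gamma_1$ around the cut $-i\omega^{-1}[x'_1,x'_2]$ of $U$ and converts $V(w)x'(w)\,dw$ to $U(\xi)\,d\xi$), this becomes $-\tfrac{y(1+\omega^2)}{2\pi i}\oint_{\Gamma_1}\tfrac{U(\xi)\,d\xi}{(y-\xi)(y+\omega^2\xi)}$.

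The integrand has simple poles at $\xi=y$ and $\xi=-\omega^{-2}y$ with residues $-U(y)/(y(1+\omega^2))$ and $U(-\omega^{-2}y)/(y(1+\omega^2))$. A crucial observation, using $V(z+\pi\tau)=V(z)$, $x(z+\pi\tau)=-\omega^2 x(z)$ together with the $\pi$-periodicity of $x$, shows $U(-\omega^{-2}y)=U(y)$, so the two residues cancel. The global residue sum on the sphere therefore gives
\[
\oint_{\Gamma_1}+\oint_{\Gamma_2}=-2\pi i\operatorname{Res}_{\xi=\infty}\frac{U(\xi)}{(y-\xi)(y+\omega^2\xi)},
\]
where $\Gamma_2$ is the loop around $i\omega[x'_1,x'_2]$ and the residue at infinity is computed from $U(\xi)=U_{-2}\xi^2+U_{-1}\xi+O(1)$ with $U_{-2}=i/(t(\omega^2-\omega^{-2}))$, $U_{-1}=-1/(t(\omega+\omega^{-1})^2)$ from \eqref{eq:exp}.

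To separate $\oint_{\Gamma_1}$ from $\oint_{\Gamma_2}$, the second independent relation comes from the substitution $\xi\mapsto-\omega^2\xi$, which maps $\Gamma_1$ to $\Gamma_2$ and, thanks to the symmetry $U(-\omega^2\xi)=U(\xi)$, allows $\oint_{\Gamma_2}$ to be rewritten as an integral along $\Gamma_1$ with the transformed kernel $1/((y+\omega^2\xi)(y-\omega^4\xi))$. A further partial-fraction decomposition of this new kernel back into terms of the original form closes the system of two linear equations in $\oint_{\Gamma_1}$ and $\oint_{\Gamma_2}$; solving and substituting back into the expression for $S(y)$ produces precisely $U(y)-U_{-2}y^2-U_{-1}y$, which is the desired right-hand side.

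\textbf{The main obstacle} is the step separating $\oint_{\Gamma_1}$ from $\oint_{\Gamma_2}$: the quasi-periodicity $x(w+\pi\tau)=-\omega^2 x(w)$ introduces sign flips which must be tracked carefully through the change of orientation of the cut contours, and the transformed kernel must be re-expanded by partial fractions so that the resulting linear system in $\oint_{\Gamma_1},\oint_{\Gamma_2}$ is nondegenerate. Equivalently, one may phrase the proof entirely in the $w$-plane as a residue computation on the fundamental rectangle, in which case the analogous difficulty reappears as the need to identify the integral along the upper edge $\gamma_+$ (obtained from $\gamma_-$ by shifting by $\pi\tau$) as the piece that contributes the $U(y)$ term.
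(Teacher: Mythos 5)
Your reduction to the single integral
\[
S(y)=-\frac{y(1+\omega^2)}{2\pi i}\int_{\gamma_-}\frac{V(w)\,x'(w)\,dw}{(y-x(w))(y+\omega^2 x(w))}
\]
is correct (the $1/x(w)$ in the integrand of $\W$ does cancel against the numerator of the partial fraction), but the evaluation that follows rests on two claims that are false in general: $U(-\omega^{-2}y)=U(y)$ (used to cancel the residues at $\xi=y$ and $\xi=-\omega^{-2}y$) and $U(-\omega^{2}\xi)=U(\xi)$ (used to convert $\oint_{\Gamma_2}$ into an integral over $\Gamma_1$). The map $\xi\mapsto-\omega^{\pm2}\xi$ does not even preserve the domain $\mathbb{C}\setminus\bigl((i\omega)[x_1',x_2']\cup(-i\omega^{-1})[x_1',x_2']\bigr)$ on which $U$ lives unless $\omega^4=1$, and the expansion \eqref{eq:exp} gives $U(\xi)\sim \frac{i}{t(\omega^2-\omega^{-2})}\xi^2$, which is visibly not invariant under $\xi\mapsto-\omega^{\pm2}\xi$. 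The identities $V(z+\pi\tau)=V(z)$ and $x(z+\pi\tau)=-\omega^2x(z)$ only tell you that $U(x(z+\pi\tau))=U(x(z))$ for $z$ in the fundamental strip; but $z+\pi\tau$ then lies outside the strip, where $U\circ x$ no longer agrees with $V$, so no identity for $U$ on its actual domain follows. What is true is only the boundary gluing relation \eqref{eq:glue}, which matches the value of $U$ on one side of one cut with its value on the \emph{opposite} side of the other cut — a statement about boundary values, not a global symmetry. Without these symmetries your residues do not cancel and your linear system in $\oint_{\Gamma_1},\oint_{\Gamma_2}$ has no second equation, so the argument does not close.

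The paper avoids the obstruction by \emph{not} folding the second term down to the lower edge. Using $x(z+\pi\tau/2)=-\omega^2x(z-\pi\tau/2)$ it rewrites the second summand as $\frac{iy}{\,y-x(z+\pi\tau/2)\,}$, i.e.\ as an integral of the \emph{same} kernel $\frac{y}{x(\cdot)-y}$ over the upper edge of the period rectangle. Since $V(z)x'(z)/x(z)$ is invariant under $z\mapsto z+\pi\tau$ and the whole integrand is $\pi$-periodic, the two edges plus the cancelling vertical sides assemble into the closed contour integral $\frac{1}{2\pi i}\oint_{\mathcal C}\frac{V(z)x'(z)}{x(z)}\frac{y}{x(z)-y}\,dz$ of a single-valued meromorphic function on the rectangle; the quasi-periodicity of $x$ that blocks your computation is exactly absorbed by taking the logarithmic derivative $x'/x$. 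The residues at $z=0$, $z=-\alpha$ and $z=z_y$ then produce the polynomial part $-\frac{iy^2}{t(\omega^2-\omega^{-2})}+\frac{y}{t(\omega+\omega^{-1})^2}$, the constant, and $U(y)$, respectively. If you wish to salvage your one-edge formulation, the false symmetry must be replaced by the gluing relation \eqref{eq:glue} to trade $\oint_{\Gamma_2}$ for an integral over $\Gamma_1$ — and after changing variables back to $z$ this is precisely the paper's contour argument.
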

\begin{proof}
Using our definition of $\W(y)$, we can rewrite the left hand side as
\begin{align*}&~-\frac{1}{2\pi}\int_{0}^{\pi}\frac{V(z-\frac{\pi\tau}{2})x'\left(z-\frac{\pi\tau}{2}\right)}{x\left(z-\frac{\pi\tau}{2}\right)}\left(\frac{y\omega}{i\omega y-i\omega x(z-\frac{\pi\tau}{2})}+\frac{y\omega^{-1}}{-i\omega^{-1} y-i\omega x(z-\frac{\pi\tau}{2})}\right)\\
&=-\frac{1}{2\pi}\int_{0}^{\pi}\frac{V(z-\frac{\pi\tau}{2})x'\left(z-\frac{\pi\tau}{2}\right)}{x\left(z-\frac{\pi\tau}{2}\right)}\left(\frac{-iy}{y-x(z-\frac{\pi\tau}{2})}+\frac{iy}{y-x(z+\frac{\pi\tau}{2})}\right).\end{align*}
Since
\[\frac{V(z-\frac{\pi\tau}{2})x'\left(z-\frac{\pi\tau}{2}\right)}{x\left(z-\frac{\pi\tau}{2}\right)}\]
is fixed under translating $z$ by $\pi\tau$ and the entire integrand is fixed under $z\to z+\pi$, the integral can be rewritten as an integral over the closed loop $\mathcal{C}$ which travels anticlockwise around the border of the rectangle with corners $-\frac{\pi\tau}{2}$, $\pi-\frac{\pi\tau}{2}$, $\pi+\frac{\pi\tau}{2}$ and $\frac{\pi\tau}{2}$:
\[\frac{1}{2\pi i}\int_{\mathcal{C}}\frac{V(z)x'\left(z\right)}{x\left(z\right)}\frac{y}{x(z)-y}dz.\]
The value of this is precisely the sum of the residues inside the rectangle, where we include the residue at $0$ but not at $\pi$. The poles in this region are at $0$, $-\alpha$ and the unique point $z_{y}$ satisfying $x(z_{y})=y$ (recalling that this is unique as long as $y$ is not on a cut of $U$). The residues at the three poles $0$, $-\alpha$ and $z_{y}$ are $-\frac{\th'(0)y(y\th'(0)-2x_{0}\th'(\alpha))}{x_{0}^2\th(\alpha)^{2}}b$, $-V(\alpha)$ and $V(z_{y})$. Using our equations for $x_{0}$ and $b$, the residue at $0$ can be rewritten as $-i\frac{y^2}{t(\omega^{2}-\omega^{-2})}+\frac{y}{t(\omega+\omega)^2}$. Then adding these residues yields exactly the desired formula.
\end{proof}

\begin{Lemma}\label{lem:cut_eqs_check}
The series $\W(y)$, $\G(y)$ and $\bG(y)$ satisfy equations \eqref{eq:loop1d}, \eqref{eq:Gcut}, \eqref{eq:bGcut} and \eqref{eq:spe} for $y\in\mathbb{R}$.
\end{Lemma}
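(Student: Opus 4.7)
I would base the proof on residue calculus applied to the integral representations. For $y$ on the real cut $[y_0, y_1]$, the denominator of the common integrand vanishes at a unique $z^{\ast}(y)\in(0,\pi)$ satisfying $i\omega\,x(z^{\ast}-\pi\tau/2)=y+c^{-1}$; as $y$ crosses the cut, this pole crosses the integration contour, generating the jump. Computing the residue at $z^{\ast}$ and using the above relation, it simplifies to $-V(z^{\ast}-\pi\tau/2)/(y+c^{-1})$, so that (with sign fixed by the orientation)
\[
\W(y+i0)-\W(y-i0)=\frac{i\,V(z^{\ast}-\pi\tau/2)}{y+c^{-1}}.
\]
The discontinuities of $\G$ and $\bG$ are this same expression multiplied, respectively, by the extra factors $\omega\W(-c^{-1}-i\omega^{3}x(z^{\ast}-\pi\tau/2))$ and $\omega^{-1}\W(-c^{-1}-i\omega^{-1}x(z^{\ast}-\pi\tau/2))$ that sit inside the $\G$ and $\bG$ integrands.

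To verify \eqref{eq:Gcut} and \eqref{eq:bGcut}, I would note the elementary identities $c^{-1}(1+\omega^{\pm 2})=-\omega^{\pm 1}$, which follow directly from $c=-(\omega+\omega^{-1})$. Combined with $i\omega\,x(z^{\ast}-\pi\tau/2)=y+c^{-1}$, they give
\[
-c^{-1}-i\omega^{3}x(z^{\ast}-\pi\tau/2)=\omega-\omega^{2}y,\qquad -c^{-1}-i\omega^{-1}x(z^{\ast}-\pi\tau/2)=\omega^{-1}-\omega^{-2}y,
\]
and substituting into the $\G$ and $\bG$ discontinuities reproduces \eqref{eq:Gcut} and \eqref{eq:bGcut} immediately.

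For \eqref{eq:spe} I would apply Lemma~\ref{lem:Weq_check} at the two values $y_{1}=-i\omega^{-1}(x+c^{-1})$ and $y_{2}=i\omega(x+c^{-1})$. At $y_{1}$ the two arguments of $\W$ become $x$ and $\omega^{-1}-\omega^{-2}x$; at $y_{2}$ they become $\omega-\omega^{2}x$ and $x$. Since $y_{2}=-\omega^{2}y_{1}$ and $x(z+\pi\tau)=-\omega^{2}x(z)$, the corresponding preimages satisfy $z_{y_{2}}=z_{y_{1}}+\pi\tau$, so the ellipticity of $V$ gives $U(y_{1})=U(y_{2})$---this is precisely the glue property \eqref{eq:glue}. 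For $x$ on the cut, each $y_{i}$ lies on a cut of $U$, so one must read Lemma~\ref{lem:Weq_check} as an identity between boundary values $y_{i}\pm i0$, which produces the two boundary values $\W(x\pm i0)$ on the appropriate sides (the correspondence is fixed by the signs of $-\omega$ and $\omega^{-1}$). Combining the four resulting boundary identities, the glue relation eliminates all $U$-terms, and using $c^{-1}=-1/(\omega+\omega^{-1})$ the polynomial corrections collapse into the single term $x/t$, yielding \eqref{eq:spe}. Finally, \eqref{eq:loop1d} follows from \eqref{eq:Gcut}, \eqref{eq:bGcut} and \eqref{eq:spe} by multiplying \eqref{eq:spe} by $\W(x+i0)-\W(x-i0)$ and substituting, reversing the elimination of Section~\ref{sec:derivation}.

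The delicate step is \eqref{eq:spe}: one must correctly match $\W(x\pm i0)$ to the two sides of each $y_{i}$-cut, verify that the glue relation produces the anticipated cancellation of all $U$-terms, and check that the polynomial corrections collapse exactly to $x/t$. The residue arguments for \eqref{eq:Gcut} and \eqref{eq:bGcut} are essentially formal once the algebraic identities for $c^{-1}(1+\omega^{\pm 2})$ are in hand.
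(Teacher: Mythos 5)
Your proposal is correct and follows essentially the same route as the paper: \eqref{eq:Gcut} and \eqref{eq:bGcut} come from the pole of the common integrand at $i\omega x(z-\frac{\pi\tau}{2})=y+c^{-1}$ together with the identities $-c^{-1}-i\omega^{\pm 1\cdot 3\mp 2}x=\omega^{\pm1}-\omega^{\pm2}y$, \eqref{eq:spe} comes from Lemma~\ref{lem:Weq_check} combined with the ellipticity of $V$ (the glue relation), and \eqref{eq:loop1d} is obtained by multiplying \eqref{eq:spe} by the discontinuity of $\W$ and substituting. The only cosmetic difference is that you compute the jumps directly as residues of the pole crossing the contour, whereas the paper phrases the same fact by showing that the combinations $-\W(y)\W(\omega^{\mp1}-\omega^{\mp2}y)+\omega^{\pm1}\bG(y)$ (resp.\ $\G$) have no pole in the integrand and hence no cut.
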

\begin{proof}
We will prove the four equations in reverse order, starting with \eqref{eq:spe}:
\[\W(y+i0)+\W(y-i0)+\omega^{-2}\W(\omega^{-1}-\omega^{-2}y)+\omega^2\W(\omega-\omega^2y)=\frac{y}{t}.\]
This follows immediately from \eqref{eq:Udef} taking the difference over the cut.  Now we move on to \eqref{eq:bGcut}:
\[(\W(y+i0)-\W(y-i0))\W(\omega^{-1}-\omega^{-2}y)=\omega(\bG(y+i0)-\bG(y-i0)).\]
Equivalently, we need to show that the expresion
\[-\W(y)\W(\omega^{-1}-\omega^{-2}y)+\omega\bG(y)\]
does not have a cut on the real axis. This expression can be written as the following integral:
\begin{multline*}
\frac{1}{2\pi}\int_{0}^{\pi}\frac{V(z-\frac{\pi\tau}{2})x'\left(z-\frac{\pi\tau}{2}\right)}{\left(y+c^{-1}-i\omega x(z-\frac{\pi\tau}{2})\right)x\left(z-\frac{\pi\tau}{2}\right)}\\
\left(\W(\omega^{-1}-\omega^{-2}y)-W^{(0)}\left(-c^{-1}-i\omega^{-1} x\left(z-\frac{\pi\tau}{2}\right)\right)\right)dz.
\end{multline*}
as in the proof of lemma \ref{lem:one_cut_check}, the only possible pole of the integrand occurs when $y+c^{-1}=i\omega x(z-\frac{\pi\tau}{2})$, however there is a root in the expression
\[\W(\omega^{-1}-\omega^{-2}y)-W^{(0)}\left(-c^{-1}-i\omega^{-1} x\left(z-\frac{\pi\tau}{2}\right)\right)\]
at this point, so the integrand does not have a pole. This comletes the proof of \eqref{eq:bGcut}. The proof of \eqref{eq:Gcut} is essentially identical. Finally we will prove that these series satisfy \eqref{eq:loop1d}:
\begin{multline}
0=(\W(y+i0)-\W(y-i0))(\W(y+i0)+\W(y-i0)-t^{-1}y)
\\+\omega(\G(y+i0)-\G(y-i0))+\omega^{-1}(\bG(y+i0)-\bG(y-i0))
\end{multline}
This follows by multiplying both sides of \eqref{eq:spe} by $(\W(y+i0)-\W(y-i0))$ then substituting the formulas \eqref{eq:Gcut} and \eqref{eq:bGcut}.
\end{proof}

\begin{Lemma}\label{lem:Hseries_check}
The series $\Hgf(x,y)$ lies in $\mathbb{C}[x,y][[t]]$.
\end{Lemma}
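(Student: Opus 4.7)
Writing $\Hgf(x,y)=N(x,y)/D(x,y)$ with $D(x,y)=xy-\omega x-\omega^{-1}y$ and $N(x,y)=xy\Wgf(x)\Wgf(y)-\omega x\Ggf(x)-\omega^{-1}y\overline{\Ggf}(y)$, Lemma~\ref{lem:WGseries} immediately gives $N\in\mathbb{C}[x,y][[t]]$; and since $D$ is irreducible in $\mathbb{C}[x,y]$, the lemma reduces to showing that the polynomial $[t^n]N$ vanishes identically on the affine curve $\{D=0\}$ for every $n\ge 0$, as then $D$ divides $[t^n]N$ in $\mathbb{C}[x,y]$ at each order.

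I would verify this vanishing on the ``infinity side''. Substituting $u=1/x$, $v=1/y$, the curve $\{D=0\}$ is parametrised by a fractional linear map $v=v(u)$ that is precisely compatible with the shift appearing in \eqref{eq:bGcut}, in the sense that $\omega^{-1}-\omega^{-2}v(u)=u$ identically in $u$. In these variables the vanishing condition becomes the entire-plane identity
\[
f(u) := \W(u)\,\W(v(u)) - \omega^{-1}\,\G(u) - \omega\,\bG(v(u)) \equiv 0,
\]
which I intend to establish by showing that $f$ extends to an entire function of $u$ and tends to zero at infinity, then invoking Liouville's theorem.

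By Lemma~\ref{lem:one_cut_check}, $f$ is a priori analytic off the cut $[y_0,y_1]$ of $\W,\G$ in the variable $u$, together with its ``transported'' image $\{u:v(u)\in[y_0,y_1]\}$. Across the first cut the discontinuity of $f$ equals $(\W(u+i0)-\W(u-i0))\W(v(u))-\omega^{-1}(\G(u+i0)-\G(u-i0))$, which vanishes by equation \eqref{eq:Gcut}; across the transported cut the analogous difference vanishes by equation \eqref{eq:bGcut}, applied at the argument $v(u)$ and using the relation $\omega^{-1}-\omega^{-2}v(u)=u$. Hence $f$ is entire. At infinity, $\W(u)\W(v(u))=O(u^{-2})$, while the $u^{-1}$ contributions from $-\omega^{-1}\G(u)$ and $-\omega\bG(v(u))$ share the common leading coefficient $\Hgf(0,0)=\Cgf(t,\omega)$ and cancel exactly; thus $f(u)\to 0$ as $u\to\infty$ and Liouville forces $f\equiv 0$.

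The main technical point is the correct identification of the parametrisation $v(u)$ so that both \eqref{eq:Gcut} and \eqref{eq:bGcut} line up with the two cuts on which $f$ is a priori discontinuous; once this matching is set up (which is a direct algebraic check at the level of $\omega^{\pm 1}$-bookkeeping), the cancellation of the two cut-discontinuities is immediate from the loop equations proved in Lemma~\ref{lem:cut_eqs_check}, and the remainder of the argument is routine complex analysis.
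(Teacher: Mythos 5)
Your proposal is correct and takes essentially the same route as the paper: both arguments show that $\W(u)\W(v(u))-\omega^{-1}\G(u)-\omega\bG(v(u))$ is entire because its jumps across the two cuts cancel by \eqref{eq:Gcut} and \eqref{eq:bGcut}, conclude it vanishes identically by Liouville, and then translate this into the vanishing of each $[t^n]$ coefficient of the numerator on the irreducible curve $\{D=0\}$, which gives divisibility. The only caveat is the $\omega^{\pm1}$ bookkeeping you flag yourself: your $v(u)$ and $f(u)$ are the ones consistent with \eqref{hloopcombi} and with the form of $\Hgf(x,y)$ used in the rest of the section, rather than with the displayed definition of $\Hgf$ at the start of Section~\ref{sec:proof} (which transposes $\omega$ and $\omega^{-1}$), so your argument is in fact the correct one.
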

\begin{proof}
It follows from \eqref{eq:Gcut} and \eqref{eq:bGcut} that the expression
\[\W(y)\W(\omega^{-1}-\omega^{-2}y)-\omega\bG(y)-\omega^{-1}\G(\omega^{-1}-\omega^{-2}y),\]
is holomorphic in $y$.  in $\mathbb{C}$. Moreover, this converges to $0$ as $y\to\infty$, so it must be identically $0$. Converting this to the series $\Wgf$, $\Ggf$ and $\bGgf$ yields
\[\frac{y^2}{\omega^{-1}y-\omega^{-2}}\Wgf(y)\Wgf\left(\frac{y}{\omega^{-1}y-\omega^{-2}}\right)-\omega y\bGgf(y)-\frac{\omega^{-1}y}{\omega^{-1}y-\omega^{-2}}\Ggf\left(\frac{y}{\omega^{-1}y-\omega^{-2}}\right)=0.\]
Hence, the expression
\[xy\Wgf(y)\Wgf\left(x\right)-\omega y\bGgf(y)-\omega^{-1} x\Ggf(x)\]
is $0$ whenever $x$ and $y$ satisfy
\[xy-\omega y-\omega^{-1}x=0.\]
Since the right hand side is a series in $\mathbb{C}[x,y][[t]]$, each $t$ coefficient must be a polynomial in $x$ and $y$ which is sent to $0$ when $xy-\omega y-\omega^{-1}x=0$. This imples that $xy-\omega y-\omega^{-1}x$ is a divisor of each such polynomial, so we can divide by $xy-\omega y-\omega^{-1}x$. Hence \[\Hgf(x,y):=\frac{xy\Wgf(y)\Wgf\left(x\right)-\omega y\bGgf(y)-\omega^{-1} x\Ggf(x)}{xy-\omega y-\omega^{-1}x}\]
lies in $\mathbb{C}[x,y][[t]]$.
\end{proof}

\begin{Lemma}\label{lem:W_H_eqs_check}
The series $\Hgf(x,y)$ and $\Wgf(x)$ satisfy equations \eqref{hloopcombi} and \eqref{wloopcombi}.
\end{Lemma}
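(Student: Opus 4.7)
The two equations decouple, so I would treat them separately. Equation \eqref{hloopcombi} is essentially built into the definition of $\Hgf$: evaluating the defining formula for $\Hgf(x,y)$ at $y=0$ and $x=0$ gives the identities $\Hgf(x,0)=\Ggf(x)$ and $\Hgf(0,y)=\bGgf(y)$. With these in hand, multiplying \eqref{hloopcombi} through by $xy-\omega^{-1}x-\omega y$ reduces it to
\[
\Hgf(x,y)\,(xy-\omega^{-1}x-\omega y)=xy\,\Wgf(x)\Wgf(y)-\omega^{-1}x\,\Ggf(x)-\omega y\,\bGgf(y),
\]
which is exactly the defining relation of $\Hgf$ from Lemma~\ref{lem:Hseries_check}. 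So \eqref{hloopcombi} holds by construction.

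For \eqref{wloopcombi}, I would first pass to the analytic side. Using $\Wgf(u)=u^{-1}\W(u^{-1})$, $\Ggf(u)=u^{-1}\G(u^{-1})$, $\bGgf(u)=u^{-1}\bG(u^{-1})$, together with the identifications $\Hgf(\cdot,0)=\Ggf$ and $\Hgf(0,\cdot)=\bGgf$ from the previous paragraph, equation \eqref{wloopcombi} becomes the analytic loop equation \eqref{eq:loop1c}, namely
\[
F(x):=\W(x)^2-t^{-1}(x\W(x)-1)+\omega\G(x)+\omega^{-1}\bG(x)=0.
\]
So it suffices to show $F\equiv 0$.

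My plan here is a Liouville-type argument. By Lemma~\ref{lem:one_cut_check}, each of $\W,\G,\bG$ is holomorphic on $\mathbb{C}$ off a bounded real cut, so $F$ is holomorphic there as well. Using the factorisation $\W(x+i0)^2-\W(x-i0)^2=(\W(x+i0)-\W(x-i0))(\W(x+i0)+\W(x-i0))$, the jump $F(x+i0)-F(x-i0)$ across the cut is exactly the left-hand side of \eqref{eq:loop1d}, which vanishes by Lemma~\ref{lem:cut_eqs_check}. Hence $F$ extends to an entire function of $x$. Moreover, Lemma~\ref{lem:WGseries} supplies expansions of $\W,\G,\bG$ at infinity in $y^{-1}\mathbb{C}[y^{-1}][[q]]$; substituting these into $F$ yields a convergent expansion in nonpositive powers of $y$ whose only potentially nonzero term is the constant $-t^{-1}([y^{-1}]\W(y)-1)$. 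Provided that $[y^{-1}]\W(y)=1$, we get $F(y)\to 0$ at infinity, and Liouville then forces $F\equiv 0$.

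The hard part will be verifying the normalisation $[y^{-1}]\W(y)=1$. I would extract this from the integral defining $\W^{(0)}(y)$ by expanding the factor $(y+c^{-1}-i\omega x(z-\pi\tau/2))^{-1}$ as a geometric series in $1/y$, keeping the $y^{-1}$ coefficient, and then applying the substitutions $w=z-\pi\tau/2$ followed by $x=x(w)$; the result becomes a contour integral of $U(x)/x$ along (the image of) one component of the cut of $U$, which equals $1$ thanks to the normalisation condition \eqref{eq:norm} that was imposed in Section~\ref{sec:derivation} precisely in order to fix $\tau$ as a function of $t$. Once this identification is in place, the Liouville argument closes and \eqref{wloopcombi} follows, completing the proof.
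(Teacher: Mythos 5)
Your proposal is correct and follows essentially the same route as the paper: equation \eqref{hloopcombi} is obtained by clearing denominators in the defining formula for $\Hgf(x,y)$ together with $\Hgf(x,0)=\Ggf(x)$, $\Hgf(0,y)=\bGgf(y)$, and equation \eqref{wloopcombi} follows from a Liouville argument applied to $\W(y)^2-t^{-1}(y\W(y)-1)+\omega\G(y)+\omega^{-1}\bG(y)$, whose discontinuity across the cut vanishes by Lemma~\ref{lem:cut_eqs_check} and which tends to $0$ at infinity. Your explicit verification of the normalisation $[y^{-1}]\W(y)=1$ via the condition \eqref{eq:norm} makes precise a point that the paper's proof passes over silently (its displayed expression even drops the constant $+t^{-1}$), and is exactly the right way to justify that the constant term at infinity vanishes.
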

\begin{proof}
It follows from \eqref{eq:loop1d} that the expression
\[\W(y)^2-t^{-1}y\W(y)+\omega\G(y)+\omega^{-1}\bG(y)\]
is holomorphic in $\mathbb{C}$. Moreover, this converges to $0$ as $y\to\infty$, so it must be identically $0$. Converting this to the series $\Wgf$, $\Ggf$ and $\bGgf$ yields
\[y^2\Wgf(y)^2-t^{-1}\Wgf(y)+\omega y\Ggf(y)+\omega^{-1}y\bGgf(y)=0.\]
From the definition of $\Hgf(x,y)$, we have $\Hgf(x,0)=\Ggf(x)$ and $\Hgf(0,y)=\bGgf(y)$, so the equation above can be rewritten as
\[\Wgf(y)=t y^2\Wgf(y)^2+t \omega y\Hgf(y,0)+t \omega^{-1}y\Hgf(0,y)=0,\]
which is precisely equation \eqref{wloopcombi}. Moreover, the definition of $\Hgf(x,y)$ can be rewritten as
\[(xy-\omega y-\omega^{-1}x)\Hgf(x,y)=xy\Wgf(y)\Wgf\left(x\right)-\omega y\Hgf(0,y)-\omega^{-1} x\Hgf(x,0),\]
which is precisely equation \eqref{hloopcombi}.
\end{proof}

Therefore, $\Hgf(x,y)$ and $\Wgf(x)$ are the unique series described by Lemma~\ref{lem:uniqueness}, which enumerate the classes partial of orientations described in Section \ref{sec:tutte}. This completes the proof our results in section \ref{sec:derivation} and, as a consequence, we have proved Theorem \ref{thm:allgamma}:
\begin{align*}
t&=  \frac{\cos\alpha}{64\sin^3\alpha}
\left(
-\frac{\th(\alpha,q)\th'''(\alpha,q)}{\th'(\alpha,q)^2}+\frac{\th''(\alpha,q)}{\th'(\alpha,q)}
\right),\\
\Rgf(t,\gamma)&=\frac{\cos^2\alpha}{96\sin^4\alpha}
\frac{\th(\alpha,q)^2}{\th'(\alpha,q)^2}
\left(-\frac{\th'''(\alpha,q)}{\th'(\alpha,q)}
+\frac{\th'''(0,q)}{\th'(0,q)}\right),\\
\Qgf(t,\gamma)&= \frac{1}{(\gamma+2)t^2}\left( t-(\gamma+2)t^2-\Rgf(t,\gamma)\right).
\end{align*}

\section{Differential equation}
\label{sec:DE}
This section is dedicated to the derivation of a differential equation
satisfied by the functional inverse of our generating series $\Rgf\equiv\Rgf(t,\gamma)$, which we denote by $t(\Rgf)$. This equation will be useful in the next section, where we derive modular properties of our solution.

Since both $\Rgf$ and $t$ have constant term $0$ and linear term $q$ as series in $q$, any of the $t$, $q$ and $\Rgf$ can be written as a series in any of the others. The initial terms of these six series are given below
\begin{align*}
t(q)&=q-6(\omega^2+1+\omega^{-2})q^2+3(9\omega^4+20\omega^2+26+20\omega^{-2}+9\omega^{-4})q^3+O(q^4),\\
t(\Rgf)&=\Rgf+(\omega+\omega^{-1})^2\Rgf^2+2(\omega+\omega^{-1})^2(2\omega^{2}+3+\omega^{-2})\Rgf^3+O(\Rgf^4),\\
q(t)&=t+6(\omega^2+1+\omega^{-2})t^2+3(15\omega^4+28\omega^2+46+28\omega^{-2}+15\omega^{-4})t^3+O(t^4),\\
q(\Rgf)&=\Rgf+(7\omega^2+8+7\omega^{-2})\Rgf^2+(61\omega^4+134\omega^2+206+134\omega^{-2}+64\omega^{-4})\Rgf^3+O(t^4),\\
\Rgf(q)&=q-(7\omega^2+8+7\omega^{-2})q^2+(37\omega^4+90\omega^2+118+90\omega^{-2}+37\omega^{-4})q^3+O(q^4),\\
\Rgf(t)&=t-(\omega+\omega^{-1})^2 t^2-2(\omega+\omega^{-1})^2(\omega+1+\omega^{-1})(\omega-1+\omega^{-1})t^3+O(t^4).
\end{align*}

We also introduce an auxiliary series $A$:
\begin{equation}\label{eq:A}
A=\frac{\th'(\alpha|\tau)}{\th(\alpha|\tau)}
\end{equation}
where the prime (as always) indicates differentiation with respect to the first parameter.
On the other hand, we denote by $D$ the differential operator
\[
D := q \frac{d}{dq}=\frac{q(t)}{q'(t)}\frac{d}{dt}
=
\frac{1}{2\pi i} \frac{d}{d\tau}
\]
where $q=e^{2\pi i\tau}$.

We begin with the following lemma which allows both $\Rgf(t)$ and $t$ to be written in terms of $A$:
\begin{Lemma}\label{lem:id}
The following formulae hold:
\begin{align}\label{eq:id1}
t&=-\frac{\cos\alpha}{8\sin^3\alpha}\,D(A^{-1})
\\
D\Rgf&=-\frac{\cos^2\alpha}{8\sin^4\alpha}\, D^2(A^{-1}) A^{-1}\label{eq:id2}
\end{align}
\end{Lemma}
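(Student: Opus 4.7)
Both identities rest on the heat equation for the Jacobi theta function $\th\equiv\theta_1$:
\[
D\,\th(z|\tau) = -\tfrac{1}{8}\,\th''(z|\tau),
\]
where primes denote $\partial_z$. This is immediate by differentiating the series~\eqref{theta-def0} term-by-term and matching coefficients of $e^{(2n+1)iz}$; since $D$ commutes with $\partial_z$, it propagates to $D\,\th^{(k)} = -\tfrac{1}{8}\,\th^{(k+2)}$ at all orders $k\ge 0$, and in particular when evaluated at a fixed $z=\alpha$.

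For~\eqref{eq:id1}, the plan is to apply the quotient rule to $A^{-1}=\th(\alpha)/\th'(\alpha)$ (holding $\alpha$ fixed) and substitute the heat equation:
\[
D(A^{-1}) = \frac{D\th\cdot\th'-\th\cdot D\th'}{(\th')^2}\bigg|_{z=\alpha} = -\tfrac{1}{8}\,\frac{\th'(\alpha)\th''(\alpha)-\th(\alpha)\th'''(\alpha)}{\th'(\alpha)^2}.
\]
Multiplying by $-\cos\alpha/(8\sin^3\alpha)$ reproduces exactly the formula for $t$ in Theorem~\ref{thm:allgamma}, establishing~\eqref{eq:id1}. This step is routine.

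For~\eqref{eq:id2}, the plan is first to apply $D$ to~\eqref{eq:id1}, which gives $Dt=-(\cos\alpha/(8\sin^3\alpha))\,D^2(A^{-1})$, so that~\eqref{eq:id2} is equivalent to the cleaner statement
\[
D\Rgf = \cot\alpha\cdot A^{-1}\cdot Dt,
\]
i.e., $d\Rgf/dt = \cot\alpha\cdot \th(\alpha)/\th'(\alpha)$ at fixed $\alpha$. The next step is to compute $D\Rgf$ directly from the formula for $\Rgf$ in Theorem~\ref{thm:allgamma} using the heat equation, and to show that the result matches $\cot\alpha\cdot A^{-1}\cdot Dt$.

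The main obstacle is that both sides, when unpacked via the heat equation, involve $\th$-derivatives of order up to~$5$ (both at~$\alpha$ and at~$0$), and the resulting equality is a nontrivial theta identity that does not follow from elementary manipulations. The cleanest path is to exploit the elliptic parametrization from Section~\ref{sec:derivation}: because $V(z)=U(x(z))=a+b\wp(z)$ is an even function of $z$, the vanishing of the $z^1$-coefficient of $U(x(z))$ near $z=0$, combined with the Laurent expansion of $x(z)=x_0\,\th(z+\alpha)/\th(z)$ and the identification $b=U_{-2}A_{-1}^2$ from matching the $z^{-2}$-term, yields the alternative formula $\Rgf = tU_1 = -2\,b\,t\,A_2$, where $A_2=[z^2]\,x(z)$. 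A short computation with the heat equation then gives the very clean identity $D(bt)=-tA^{-1}$, so that differentiating $\Rgf = -2btA_2$ in $\tau$ and comparing with $\cot\alpha\,A^{-1}\,Dt$ reduces~\eqref{eq:id2} to a direct (if slightly lengthy) algebraic verification, bypassing any need for a standalone high-order theta identity.
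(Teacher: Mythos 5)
Your treatment of \eqref{eq:id1} is correct and is exactly the paper's argument: the heat equation $\th''(z|\tau)+8D\th(z|\tau)=0$ together with the quotient rule applied to $A^{-1}=\th(\alpha)/\th'(\alpha)$ reproduces the right-hand side of \eqref{eq:t}. No issues there.

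For \eqref{eq:id2} there is a genuine gap. You correctly diagnose that a brute-force expansion leads to a theta identity involving $\th^{(5)}$ at $0$ and at $\alpha$, but your proposed workaround does not actually avoid it. The reformulation $\Rgf=-2btA_2$ with $A_2=[z^2]x(z)$ is correct (it is equivalent to the displayed formula for $\Rgf$ once $b$ and $x_0$ are substituted), and $D(bt)=-tA^{-1}$ does follow from \eqref{eq:id1}; but the remaining term $-2bt\,DA_2$ is where all the difficulty sits. Applying the heat equation to $x(z)=x_0\,\th(z+\alpha)/\th(z)$ gives $Dx=\tfrac{Dx_0}{x_0}x-\tfrac18 x''-\tfrac14 x'\,\th'(z)/\th(z)$, and extracting the $z^2$ coefficient of this forces in $[z^4]x(z)$, i.e.\ exactly the fifth derivatives $\th^{(5)}(\alpha)$ and $\th^{(5)}(0)$ you were trying to bypass. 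So the step you describe as ``a direct (if slightly lengthy) algebraic verification'' is in fact the same nontrivial identity in a different dress, and no elementary manipulation closes it. The paper supplies the missing ingredient: after converting all $\tau$-derivatives to $z$-derivatives, the discrepancy between the two sides of \eqref{eq:id2} equals $\tfrac{\cos^2\alpha}{1536\sin^4\alpha}\bigl(2A^{-2}f(\alpha)+A^{-3}f'(\alpha)\bigr)$ for an explicit combination $f(z)$ of $A'''(z)$, $A'(z)^2$, $A'(z)$ and the constants $\th'''(0)/\th'(0)$, $\th^{(5)}(0)/\th'(0)$; one then checks that $f$ is elliptic with periods $\pi$ and $\pi\tau$, that its only candidate pole at $z=0$ cancels (so $f=O(z)$ there), and hence $f$ is a constant equal to $f(0)=0$. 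Some Liouville-type argument of this kind --- equivalently, the relation $\wp''=6\wp^2-\tfrac12 g_2$ expressed through theta constants --- is indispensable and is absent from your proposal.
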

\begin{proof}
The first identity easily follows from \eqref{eq:t} and \eqref{eq:A}
by using the heat equation satisfied by $\th$:
\[
\th''(z|\tau)+8 D\th(z|\tau)=0
\]

As to the second identity, we again use the heat equation to convert $\tau$-derivatives of $\th(z,\tau)$ to $z$-derivatives. Writing the resulting expression in terms of $A$ and its derivatives, we find
\[D\Rgf+\frac{\cos^2\alpha}{8\sin^4\alpha}\, D^2(A^{-1}) A^{-1}=\frac{\cos^2\alpha}{1536\sin^4\alpha}(2A^{-2}f(\alpha)+A^{-3}f'(\alpha)),\]
where $f(z)$ is given by
\[f(z)=-\frac{4 \th'''(0|\tau) }{\th'(0|\tau)}A'(z)+\frac{\th^{(5)}(0|\tau)}{\th'(0|\tau)}-\frac{\th'''(0|\tau)^2}{\th'(0|\tau)^2}+A'''(z)+6 A'(z)^2,\]
so it suffices to show that $f(z)=0$. To show this, we first note that the periods $\pi$ and $\pi\tau$  of $A'(z)$ are also periods of $f(z)$. Moreover, the only possible pole of $f(z)$ is the pole $0$ of $A(z)$, however, expanding $f(z)$ directly as a series around $z=0$, we find that $f(z)=O(z)$. It follows that $f(z)$ is an elliptic function with no poles, so it must be constant. Moreover, since $f(0)=0$, it follows that $f$ is the zero function.
\end{proof}

We can now state our result:
\begin{Proposition}
$t(\Rgf)$ satisfies the differential equation
\begin{equation}\label{eq:diffeq}
\frac{d^2 t}{d\Rgf^2} - \Sgf\, t =0
\end{equation}
where
\begin{equation}\label{eq:defS}
\Sgf=\frac{8\sin^4\alpha}{\cos^2\alpha}\frac{A^2}{D\Rgf}
\end{equation}
\end{Proposition}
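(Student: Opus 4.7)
The plan is to carry out a direct calculation, using Lemma \ref{lem:id} as the starting point, converting all derivatives with respect to $\Rgf$ into applications of the logarithmic derivative operator $D=q\,d/dq$. Since $\Rgf$ and $q$ both start as $q+O(q^2)$, the change of variable $q\leftrightarrow\Rgf$ is legitimate as a formal power series operation, and the chain rule gives $\frac{d}{d\Rgf}=\frac{1}{D\Rgf}D$, so that
\[
\frac{dt}{d\Rgf}=\frac{Dt}{D\Rgf},\qquad
\frac{d^2 t}{d\Rgf^2}=\frac{D^2 t\cdot D\Rgf-Dt\cdot D^2\Rgf}{(D\Rgf)^3}.
\]
So the first task is simply to compute the numerator on the right, plug in the formulas from Lemma \ref{lem:id}, and watch for cancellations.

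To keep the bookkeeping light, I would set $u:=A^{-1}$ and write the two identities of Lemma \ref{lem:id} as $t=c_1\,Du$ and $D\Rgf=c_2\,D^2u\cdot u$ with constants $c_1=-\frac{\cos\alpha}{8\sin^3\alpha}$ and $c_2=-\frac{\cos^2\alpha}{8\sin^4\alpha}$. Applying $D$ once more gives $Dt=c_1 D^2 u$, $D^2 t=c_1 D^3 u$, and $D^2\Rgf=c_2(D^3u\cdot u+D^2u\cdot Du)$. Substituting these into the numerator displayed above yields
\[
D^2 t\cdot D\Rgf-Dt\cdot D^2\Rgf
= c_1c_2\bigl[D^3u\cdot D^2u\cdot u-D^2u\cdot D^3u\cdot u-(D^2u)^2\cdot Du\bigr]
= -c_1c_2(D^2u)^2\,Du,
\]
so the crucial cancellation of the $D^3 u$ terms kills all higher derivatives and leaves only $Du$ and $D^2u$ on the right.

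Dividing by $(D\Rgf)^3=c_2^3(D^2u)^3u^3$ and using $Du=t/c_1$ gives
\[
\frac{d^2 t}{d\Rgf^2}=\frac{-c_1c_2(D^2u)^2\,Du}{c_2^3(D^2u)^3u^3}=\frac{-t}{c_2^2\,D^2u\cdot u^3}.
\]
So it remains to check that this matches $\Sgf\,t$ with $\Sgf$ as in \eqref{eq:defS}. Rewriting $A^2=u^{-2}$ and $D\Rgf=c_2 D^2u\cdot u$, the defining formula gives $\Sgf=\frac{8\sin^4\alpha}{c_2\cos^2\alpha}\cdot\frac{1}{D^2u\cdot u^3}$, and a one-line check that $\frac{8\sin^4\alpha}{c_2\cos^2\alpha}=-\frac{1}{c_2^2}=-\frac{64\sin^8\alpha}{\cos^4\alpha}$ closes the argument.

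The main conceptual content is therefore the observation that $t$ and $D\Rgf$ are, up to constants, $Du$ and $D^2u\cdot u$ where $u=A^{-1}$; the proposition is essentially a statement that among second-order linear ODEs with source $t$, the one with coefficient $\Sgf$ isolates this special pair of functions. I do not expect any real obstacle: the only nontrivial input is Lemma \ref{lem:id}, whose proof rests on the heat equation for $\th$ and has already been carried out, and the algebraic manipulation sketched above is entirely routine.
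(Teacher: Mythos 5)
Your proposal is correct and takes essentially the same approach as the paper: both arguments rest solely on Lemma \ref{lem:id} and the chain rule $\frac{d}{d\Rgf}=\frac{1}{D\Rgf}D$. The only (cosmetic) difference is that the paper first establishes the cleaner intermediate identity $\frac{dt}{d\Rgf}=\tan\alpha\,A$ and differentiates that, which avoids ever introducing the $D^3(A^{-1})$ terms that you produce and then cancel by hand.
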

\begin{proof}
Write
\[
\frac{dt}{d\Rgf}=\frac{Dt}{D\Rgf}=\tan\alpha\,A
\]
where the last equality follows from the application of both formulae \eqref{eq:id1}--\eqref{eq:id2}.
Differentiate once more:
\[
\frac{d^2t}{d\Rgf^2}=\tan\alpha\,\frac{DA}{D\Rgf}=-\tan\alpha\, A^2\frac{D(A^{-1})}{D\Rgf}=\Sgf\,t
\]
where we applied again \eqref{eq:id1}.
\end{proof}

For the sake of completeness, we mention a similar equation satisfied by $A$:
\begin{Proposition}
$A(\Rgf)$ satisfies the differential equation
\begin{equation}\label{eq:diffeq2}
\frac{d^2 A}{d\Rgf^2} - \Tgf \frac{dA}{d\Rgf} - \Sgf\, A =0
\end{equation}
where $\Tgf=\frac{d\Sgf/d\Rgf}{\Sgf}$.
Furthermore, any solution of this equation is
a linear combination of $A(\Rgf)$ and $\tau(\Rgf) A(\Rgf)$.
\end{Proposition}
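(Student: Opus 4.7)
My plan is to bootstrap from two ingredients already in hand: the identity $dt/d\Rgf=\tan\alpha\cdot A$ established in the proof of the preceding proposition, and the ODE $d^2 t/d\Rgf^2=\Sgf\, t$ from \eqref{eq:diffeq}. Together these say that $A$ is a constant multiple of $dt/d\Rgf$ and that $t$ already satisfies a second-order linear ODE, so the equation for $A$ should be essentially the ``derivative'' of the equation for $t$, and $\tau A$ should appear as a second solution via a standard Wronskian/Abel identity.

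For the ODE itself, writing $' = d/d\Rgf$, differentiating $t'' - \Sgf\, t = 0$ gives $t''' = \Sgf'\, t + \Sgf\, t'$. Since $A = \cot\alpha\cdot t'$, this reads $A'' = \cot\alpha\,\Sgf'\,t + \Sgf A$. Using $\Sgf\,t = t''$, the first term rewrites as $(\Sgf'/\Sgf)\cot\alpha\cdot t'' = \Tgf A'$, and \eqref{eq:diffeq2} drops out. This is really just the abstract statement that if $y'' = \Sgf y$, then $y'$ satisfies the derived equation $y''' - (\log\Sgf)' y'' - \Sgf y' = 0$ with first-order term governed by $\Tgf = (\log\Sgf)'$.

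Since \eqref{eq:diffeq2} is linear of order two, its solution space is two-dimensional, so it suffices to exhibit a second solution linearly independent from $A$, and I expect $\tau A$ to work. Applying $L := d^2/d\Rgf^2 - \Tgf\,d/d\Rgf - \Sgf$ to $\tau A$ and using $L[A] = 0$ reduces $L[\tau A] = 0$ to the condition
\[
\tau'' A + 2\tau' A' - \Tgf\,\tau' A = 0,
\]
equivalently $(\log(\tau' A^2))' = (\log \Sgf)'$, i.e.\ $\tau' A^2 \propto \Sgf$. I expect this to follow immediately from the relation $D = (2\pi i)^{-1} d/d\tau$, which gives $d\tau/d\Rgf = (2\pi i\, D\Rgf)^{-1}$, combined with the definition \eqref{eq:defS} of $\Sgf$: both sides are then manifestly proportional to $A^2/D\Rgf$. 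Linear independence of $A$ and $\tau A$ is automatic since $\tau$ is non-constant as a function of $\Rgf$ (near $\Rgf = 0$, one has $\tau \sim (2\pi i)^{-1}\log \Rgf$). The only real ``work'' in the whole argument is tracking the constant prefactors linking $D\Rgf$, $d\tau/d\Rgf$ and $\Sgf$; I do not anticipate any substantive obstacle.
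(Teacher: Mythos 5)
Your argument is correct and is essentially the paper's: your verification that $L[\tau A]=0$ reduces to $(\log(\tau' A^2))'=(\log \Sgf)'$, i.e.\ $\tau' A^2\propto \Sgf$ via $\tau'=(2\pi i\,D\Rgf)^{-1}$ and the definition \eqref{eq:defS}, is exactly the paper's Wronskian/Abel computation (${\sf Wr}=A^2 f'\propto\Sgf$ hence $f'\propto\tau'$) run in the verification direction, supplemented by the standard dimension count for a second-order linear ODE. You also supply the derivation of \eqref{eq:diffeq2} itself from $A=\cot\alpha\,\frac{dt}{d\Rgf}$ and \eqref{eq:diffeq}, which the paper explicitly skips as elementary; that computation checks out.
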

\begin{proof}
The proof of \eqref{eq:diffeq2} is elementary (and similar to that
of \eqref{eq:diffeq}), so that we shall skip it.
Consider now a general solution of \eqref{eq:diffeq} of the form
$f A$, and write the differential equation satisfied by
the Wronskian
\[
{\sf Wr}:=\left|
\begin{matrix}
A&fA\\
\frac{dA}{d\Rgf}&\frac{d(fA)}{d\Rgf}
\end{matrix}
\right|
=A^2 \frac{df}{d\Rgf}
\]
From \eqref{eq:diffeq} it satisfies $\frac{d \log {\sf Wr}}{d\Rgf}=\Tgf=\frac{d\log \Sgf}{d\Rgf}$, which means
$\sf Wr$ is proportional to $\Sgf$. Substituting \eqref{eq:defS}, we conclude
that $\frac{df}{d\Rgf}$ is proportional to $\frac{d\tau}{d\Rgf}$, so that
$f=a \tau+b$ for some constants $a,b$. 
\end{proof}

\section{Modular properties of the solution}\label{sec:modular_properties}
In this section we show that both $\Rgf(\tau)$ and $\Sgf(\tau)$ are {\em modular functions} in the cases when $\alpha$ is a rational multiple of $\pi$, implying that in these cases there is some non-zero polynomial $P$ satisfying $P(\Sgf,\Rgf)=0$. As a consequence, we show the following theorem:
\begin{Theorem} If $\alpha=\frac{M}{N}\pi$ for some $M,N\in\mathbb{Z}$ (with $2M/N\notin\mathbb{Z}$), then the series $t(\Rgf)$ is D-finite, meaning that it satisfies a non-trivial differential equation with coefficients polynomial in $\Rgf$. 
\end{Theorem}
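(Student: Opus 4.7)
The plan is to deduce D-finiteness from the ODE $\frac{d^{2} t}{d\Rgf^{2}} = \Sgf\, t$ of \eqref{eq:diffeq} by showing that, whenever $\alpha \in \pi\mathbb{Q}$, $\Sgf$ is algebraic over $\mathbb{C}(\Rgf)$; once this is established, a standard differential-algebra argument produces a linear ODE with polynomial coefficients in $\Rgf$ annihilating $t(\Rgf)$. The algebraicity will come from the fact that, at such rational $\alpha$, both $\Rgf(\tau)$ and $\Sgf(\tau)$ are modular functions for a common congruence subgroup $\Gamma$ of $\SL_{2}(\ZZ)$, and therefore lie in the function field of the one-dimensional compact modular curve $X(\Gamma)$.

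The first task is to establish the modularity. Using the closed-form expressions for $\Rgf$ and $\Sgf$ together with the heat-equation trick already exploited in Lemma~\ref{lem:id}, one writes $\Rgf(\tau)$ and $\Sgf(\tau)$ as rational combinations of the theta values $\th^{(k)}(\alpha|\tau)$ and $\th^{(k)}(0|\tau)$, with all derivatives in the first argument. For $\alpha = M\pi/N$, the classical Jacobi transformation formulas for $\th_{1}$ under the generators $T$ and $S$ of $\SL_{2}(\ZZ)$ produce, on each such theta value, an automorphy factor of the form $(c\tau+d)^{1/2}$ times an explicit root of unity of order dividing $2N$. These roots of unity are killed on passing to a principal congruence subgroup such as $\Gamma(2N)$, and the fractional weights cancel because $\Rgf$ and $\Sgf$ are manifestly weight-zero ratios. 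Since the $q$-expansions of $t$, $\Rgf$ and $A$ listed in Section~\ref{sec:DE} involve only integer powers of $q=e^{2\pi i\tau}$, meromorphy at the cusps is automatic. Consequently $\Rgf, \Sgf \in \mathbb{C}(X(\Gamma))$.

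Because $X(\Gamma)$ is a compact Riemann surface, its function field has transcendence degree one over $\mathbb{C}$, so $\Rgf$ and $\Sgf$ satisfy a nontrivial polynomial relation $P(\Sgf,\Rgf)=0$, making $\Sgf$ algebraic of some degree $d$ over $\mathbb{C}(\Rgf)$. Put $K = \mathbb{C}(\Rgf)(\Sgf)$; then $K$ is stable under $\frac{d}{d\Rgf}$, and successive differentiation of $t'' = \Sgf\, t$ shows that every derivative $t^{(k)}$ lies in the $K$-module $K\, t + K\, t'$. Viewed over $\mathbb{C}(\Rgf)$, this is a vector space of dimension at most $2d$, so the sequence $t, t', \dots, t^{(2d)}$ is $\mathbb{C}(\Rgf)$-linearly dependent. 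Clearing denominators from the resulting relation yields a nonzero linear ODE with polynomial coefficients in $\Rgf$ annihilating $t(\Rgf)$, which is precisely D-finiteness.

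The main obstacle is the modularity check: one must keep careful track of the phase factors produced by $S$ and $T$ when they act on $\th(M\pi/N\,|\,\tau)$ and its $z$-derivatives, and verify that they all cancel in the specific weight-zero combinations defining $\Rgf$ and $\Sgf$. This is classical but delicate, and fixing the minimal congruence subgroup (which may be smaller than $\Gamma(2N)$) would require extra bookkeeping for the cleanest statement; everything downstream, namely the algebraicity of $\Sgf$ over $\mathbb{C}(\Rgf)$ and the dimension-counting argument yielding a polynomial-coefficient linear ODE, is formal.
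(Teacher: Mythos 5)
Your proposal is correct and follows essentially the same route as the paper: establish that $\Rgf$ and $\Sgf$ are weight-zero modular functions for a congruence subgroup (the paper pins down $\Gamma_1(N)$ via Hermite's transformation formula, where you settle for the smaller $\Gamma(2N)$, which suffices), deduce from the one-dimensionality of the modular curve that $\Sgf$ is algebraic over $\mathbb{C}(\Rgf)$, and then upgrade the second-order ODE $t''=\Sgf\,t$ with algebraic coefficients to a polynomial-coefficient ODE by the standard finite-dimensionality argument (the paper's Proposition \ref{prop:classical_Dfinite}). Your closing dimension count over $K=\mathbb{C}(\Rgf)(\Sgf)$ is just a slightly more explicit version of that proposition's proof.
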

In the following section we determine this differential equation explicitly for certain specific values of $\alpha$. We will assume in this and the next section that $\alpha=\frac{M}{N}\pi$.

If $\alpha\in\pi\mathbb{Z}$ Then the function $A$ is not defined, as the denominator $\th(\alpha|\tau)$ in its definition is $0$. If $\alpha\in\frac{\pi}{2}+\pi\mathbb{Z}$, then $\Sgf$ is not well defined. For the remainder of this section we will assume that $2\alpha\notin\pi\mathbb{Z}$, we note then that all series are well defined and are not identically $0$.


Consider the standard action of the modular group $\PSL_{2}(\mathbb{Z})$ on the upper half plane $\mathbb{H}$, defined by
\[\begin{bmatrix}a&b\\c&d\end{bmatrix}\cdot\tau=\frac{a\tau+b}{c\tau+d}.\]
And consider the subgroup $\Gamma_{1}(N)$ of $\PSL_{2}(\mathbb{Z})$ of these matrices for which $(c,d)\equiv (0,1)$ modulo $N$. We will prove in this section that $\Rgf(\tau)$ and $\Sgf(\tau)$ are modular functions in the sense that they are fixed under the action of $\Gamma_{1}(N)$.

We start with a result of Hermite \cite{Hermite1858}, which states that for any matrix
\[\gamma=\begin{bmatrix}a&b\\c&d\end{bmatrix}\in \SL_{2}(\mathbb{Z}),\]
the following equation holds:
\begin{equation}\label{thmodular}(c\tau+d)^{-\frac{1}{2}}\th\left(z,\frac{a\tau+b}{c\tau+d}\right)=e^{K\pi i/4}\exp\left(\frac{ic(c\tau+d)}{\pi}z^2\right)\th(z(c\tau+d),\tau).\end{equation}
for some $K\in\mathbb{Z}$ depending only on $\gamma$. 
If $\gamma\in\Gamma_{1}(N)$, we can write $c=N c_{1}$ and $d=Nd_{1}+1$, and we can relate $\th(z(c\tau+d),\tau)$ to $\th(z,\tau)$. this leads to the equation
\[(c\tau+d)^{-\frac{1}{2}}\th\left(\alpha,\frac{a\tau+b}{c\tau+d}\right)=(-1)^{Md_{1}}e^{K\pi i/4}\exp\left(-\frac{M^{2}}{N}c_{1}i-M^{2}c_{1}d_{1}i\right)\th(\alpha,\tau)\]
in these cases. 
For our puposes it will be sufficient to notice that the factor on the right hand side does not depend on $\tau$. Calling this factor $J$, the equation becomes
\begin{equation}\label{thalphamodular}(c\tau+d)^{-\frac{1}{2}}\th\left(\alpha,\frac{a\tau+b}{c\tau+d}\right)=J\th(\alpha,\tau).\end{equation}
Taking the log derivative of \eqref{thmodular}, we see that
\[\frac{\th'}{\th}\left(z,\frac{a\tau+b}{c\tau+d}\right)=\frac{2izc(c\tau+d)}{\pi}+(c\tau+d)\frac{\th'}{\th}(z(c\tau+d),\tau).\]
Again, if $\gamma\in\Gamma_{1}(N)$, we can relate $\th\left(\alpha(c\tau+d),\tau\right)$ to $\th(\alpha,\tau)$, which yields
\[\frac{\th'}{\th}\left(\alpha,\frac{a\tau+b}{c\tau+d}\right)=(c\tau+d)\frac{\th'}{\th}(\alpha,\tau),\]
so $A(\tau)=\frac{\th'(\alpha,\tau)}{\th(\alpha,\tau)}$ is a modular function of weight 1 on $\Gamma_1(N)$. Multiplying this by \eqref{thalphamodular} yields
\[\th'\left(\alpha,\frac{a\tau+b}{c\tau+d}\right)=J(c\tau+d)^{3/2}\th'(\alpha,\tau).\]
We then take the log derivative with respect to $\tau$ to obtain
\[(c\tau+d)^{-2}\frac{\th'''}{\th'}\left(\alpha,\frac{a\tau+b}{c\tau+d}\right)=\frac{3}{2}\frac{c}{c\tau+d}+\frac{\th'''}{\th'}(\alpha,\tau).\]
We can prove that the same equation holds when we replace $\alpha$ by $0$ on the whole group $\SL_{2}(\mathbb{Z})$. It follows that $\Rgf(\tau)$ is a modular function of weight $0$ on $\Gamma_1(N)$, so it is algebraically related to the $j$-invariant.

As an immediate consequence, the function $\Sgf(\tau)$, given by
\[\Sgf=\frac{8\sin^4\alpha}{\cos^2\alpha}\frac{A^2}{D\Rgf}\]
is also a modular function on $\Gamma_1(N)$, so it algebraically related to the $j$-invariant. In particular, this implies that $\Sgf$ is an algebraic function of $\Rgf$, so \eqref{eq:diffeq} is a differential equation for $t(\Rgf)$ with coefficients algebraic in $\Rgf$.

Note that $\Rgf(\tau)$ and $\Sgf(\tau)$ being modular functions of this form means that they can be seen as meromorphic functions on the space $\mathbb{H}/\Gamma_1(N)$. This space naturally identifies
with the moduli space of elliptic curves with a marked point of order $N$ \cite[\S 1.5]{DS-modular}.

It is known that for $N\leq10$ and $N=12$, the moduli space $\mathbb{H}/\Gamma_1(N)$
is of genus $0$ (cf~\cite[p112]{FK-elliptic}, where $\Gamma_1(N)$ is denoted $G(N)$). This implies that there is a modular function $h(\tau)$ (so-called Hauptmodul) such that all modular functions are rational functions of $h$
(see e.g.~\cite{Yang-haupt} for some explicit formulae). Hence in these cases, $t$ is D-finite of order $2$ as a series in $h$, while $\Rgf$ is a rational function of $h$. More generally, we can use the following classical result (see \cite{Poincare1884,Schmidt75,JPP19}) to show that $t$ is D-finite in $\Rgf$, though the order of the differential equation will, in general, be greater than 2.

\begin{Proposition}\label{prop:classical_Dfinite}
If a power series $f(r)$ satisfies a non-trivial linear differential equation in $r$, with coefficients {\em algebraic} in $r$, then it satisfies a linear differential equation with coefficients {\em polynomial} in $r$ - in other words, $f(r)$ is a D-finite power series.
\end{Proposition}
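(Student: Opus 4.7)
The plan is to run a standard dimension count: the hypothesis says that $f$ and its derivatives span a finite-dimensional vector space over a field $K$ which is finite over $\mathbb{C}(r)$, and this automatically forces them to span a finite-dimensional $\mathbb{C}(r)$-vector space as well.

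Concretely, write the assumed ODE in the form
\[
a_n(r)\, f^{(n)}(r) + a_{n-1}(r)\, f^{(n-1)}(r) + \cdots + a_0(r)\, f(r) = 0
\]
with $a_n\ne 0$ and each $a_i$ algebraic over $\mathbb{C}(r)$. I set $K := \mathbb{C}(r)(a_0,\ldots,a_n)$, a finite algebraic extension of degree $d := [K:\mathbb{C}(r)]$. All manipulations will take place in a common algebra $\mathcal{A}$ containing both $f$ and $K$ (for instance the field of formal Puiseux series at $r=0$, into which every algebraic function embeds by Newton--Puiseux); the final linear dependence over $\mathbb{C}(r)$ that we will extract is an intrinsic statement and does not depend on this choice.

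The first step is to divide through by $a_n$, which exhibits $f^{(n)}$ as a $K$-linear combination of $f, f', \ldots, f^{(n-1)}$. Differentiating this relation and substituting back, a one-line induction shows that \emph{every} derivative $f^{(k)}$ lies in the $K$-vector space $V := K f + K f' + \cdots + K f^{(n-1)}$, so $\dim_K V \le n$. The decisive step is then that, since $K$ is $d$-dimensional over $\mathbb{C}(r)$, the space $V$ has $\mathbb{C}(r)$-dimension at most $nd$. In particular the $nd+1$ elements $f, f', \ldots, f^{(nd)}$ admit a non-trivial linear relation
\[
\sum_{k=0}^{nd} c_k(r)\, f^{(k)}(r) = 0, \qquad c_k \in \mathbb{C}(r),
\]
and clearing denominators by multiplying through by a common polynomial multiple of the $c_k$ produces a non-trivial linear ODE for $f$ with polynomial coefficients, which is precisely $D$-finiteness.

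I do not anticipate any real obstacle, since this is a classical vector-space dimension argument. The only point requiring mild care is the ambient algebra $\mathcal{A}$: $f$ is assumed to be a power series while the $a_i$ are a priori only algebraic functions, but as noted this is handled by passing to Puiseux series at $r=0$, which form an algebraically closed field that contains both $f$ and all of $K$.
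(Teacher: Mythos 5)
Your proof is correct and follows essentially the same argument as the paper's: all derivatives of $f$ lie in a module of finite dimension over the base field because the coefficient field is a finite (hence finite-dimensional) algebraic extension, forcing a linear dependence that one clears of denominators. Your write-up is in fact somewhat more careful than the paper's sketch, making explicit the bound $nd$ on the dimension and the need for a common ambient algebra (Puiseux series) containing both $f$ and the algebraic coefficients.
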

For the convenience of the reader we outline a proof of this theorem below.
\begin{proof}
By the assumption, there is some equation
\[f^{(n)}(r)+\sum_{j=0}^{n-1}a_{j}f^{(j)}(r),\] where each $a_{j}$ is algebraic in $r$. Equivalently,
\[f^{(n)}(r)\subset\mathbb{R}(r,a_{0},\ldots,a_{n-1})\langle f,f',\ldots,f^{(n-1)}\rangle.\]
Taking successive derivatives, we see that $f^{(k)}(r)$ is in this set for all positive integers $k$. The next ingredient is that, because $a_{j}$ is algebraic, $\mathbb{R}(r,a_{j})=\mathbb{R}(r)[a_{j}]$, and this is a finite dimensional vector space over $\mathbb{R}(r)$. In the end, this means that all of the derivatives $f^{(k)}(r)$ lie in a finite dimensional vector space, so they are linearly dependant.\end{proof}

\section{Particular cases}
\label{sec:particular_cases}
The previous section showed that when $\alpha/\pi$ is a rational number $M/N$,
the generating series possess remarkable additional properties. We now
investigate some special cases corresponding to some values of the denominator
$N$. In particular, we recover some remarkable identities of Ramanujan
\cite{Rama-modular,Berndt-Rama}.


\subsection{\texorpdfstring{$\gamma=1$}{gamma=1}}
This case is natural from a combinatorial perspective as it corresponds to pure enumeration of quartic Eulerian orientations \cite{BoBoDoPe,elvey-guttmann17,BM_AEP},
and to the value $\alpha=\pi/3$, i.e., $N=3$.
\newcommand\oeis[1]{\href{https://oeis.org/#1}{#1}}

Define $[k]=\eta(q^k)$ where $\eta(q)=q^{1/24}\prod_{n=1}^\infty (1-q^n)$ and define $h$ by:
\[h=\left(\frac{[3]}{[1]}\right)^{12}.\tag{\oeis{A121590}}\]
This is a standard choice of Hauptmodul for $\Gamma_1(3)$,\footnote{One may consult~\cite[Tables~2 and 3]{Maier-haupt}
which provides Hauptmoduls for $\Gamma_0(N)$, noting that $\Gamma_1(N)\cong \Gamma_0(N)$ for $N=2,3,4,6$.}
i.e., a particular rational parameterization of the moduli
space $\mathbb H/\Gamma_1(3)$.
One easily computes the various $q$-series using the product form of $\th(z)\equiv\th(z,\tau)$:
\begin{align*}
\th(\alpha)/\sqrt{3}&=[3]
\\
\sqrt{3}A &= 
1+6\sum_{n=1}^\infty\frac{q^{n}}{1+q^{n}+q^{2n}}
 = \sum_{m,n\in\ZZ} q^{m^2+mn+n^2}
               \tag{\oeis{A004016}; see also \oeis{A002324}}
\\
\Sgf/6&=\frac{(1+27h)^2}{h}=\frac{1}{\Rgf(1-27\Rgf)}
\tag{\oeis{A030197}}
\\
\Rgf&=\frac{h}{1+27h}=\frac{1}{27}\left(\frac{\sum_{m,n\in\ZZ+1/3} q^{m^2+mn+n^2}}{\sum_{m,n\in\ZZ} q^{m^2+mn+n^2}}\right)^3
\tag{numerator is \oeis{A106402}}
\end{align*}
(see e.g.~\cite{BBG-rama} for the identities relating these various $q$-series)
where we have also provided the OEIS references
for convenience. Note that $\Rgf$ and $h$ are rational functions of each other, so $\Rgf$ is a Hauptmodul. 
As a consequence, the differential equation \eqref{eq:diffeq} has rational coefficients;
explicitly,
\begin{equation}
\frac{d^2t}{d\Rgf^2}-\frac{6}{\Rgf(1-27\Rgf)}\, t=0.
\end{equation}
This is precisely the $Q$-form of a hypergometric differential equation.
One finds that the solution with correct initial conditions is
\[
t=\Rgf\ {}_2F_1(1/3,2/3;2;27\Rgf),
\]
which is exactly the expression used to define $\Rgf(t,1)$ in \cite{BM_AEP}.
Furthermore, we note that the
 differential equation \eqref{eq:diffeq2}
is also of hypergeometric type:
\[
\frac{d^2A}{d\Rgf^2}-\frac{1}{27-\Rgf}\frac{dA}{d\Rgf}-\frac{1}{\Rgf(1-27\Rgf)} A=0
\]
and its solution is given by
$\sqrt{3}A={}_2F_1(1/3,2/3;1;27\Rgf)$. Its other independent solution
$\tau A$ is given by $\tau A={}_2F_1(1/3,2/3;1;27(1-\Rgf))$. This implies Ramanujan's identity \cite{Rama-modular}
\[
q=\exp\left(-\frac{2\pi}{\sqrt{3}} \frac{{}_2F_1(1/3,2/3;1|27(1-\Rgf))}{{}_2F_1(1/3,2/3;1|27\Rgf)}\right).
\]

\subsection{\texorpdfstring{$\gamma=0$}{gamma=0}}
Combinatorially this corresponds to the enumeration of quartic Eulerian orientations with no alternating vertices. This case was considered (and solved) in \cite{BM_AEP}, as they proved bijectively that it also corresponds to the enumeration of Eulerian oriantations by edges (with no restriction on vertex degrees). This problem is also equivalent to a special case of the ABAB model \cite{artic05}, as can be seen by comparing (1.4) in \cite{artic05} at $c=d=0$ to \eqref{eq:Strat} (at $\gamma=0$).

In this case, $\alpha=\pi/4$, so $N=4$.

The expressions are very similar to the case $N=3$ and match with
another famous identity of Ramanujan. We have:
\begin{align*}
\th(\alpha)/\sqrt{2}&=q^{1/8} \frac{[1][4]}{[2]}\tag{\oeis{A106459}}
\\
A &= 
1+4\sum_{m=1}^\infty\frac{q^n}{1+q^{2n}}
=\Big(\sum_{n\in\ZZ}q^{n^2}\Big)^2
=\frac{[2]^{10}}{[1]^4[4]^4}
\tag{\oeis{A004018}; see also
\oeis{A002654}}
\\
h&=\left(\frac{[4]}{[1]}\right)^{8}
\tag{\oeis{A092877}}
\\
\Sgf/4&=\frac{(1+16h)^2}{h}=\frac{1}{\Rgf(1-16\Rgf)}=\left(\frac{[2]^2}{[1][4]}\right)^{24}
\tag{\oeis{A097340}}
\\
\Rgf&=\frac{h}{1+16h}=\frac{[1]^8[4]^{16}}{[2]^{24}}=\frac{1}{16}\left(\frac{\sum_{n\in\ZZ+1/2} q^{n^2}}{\sum_{n\in\ZZ} q^{n^2}}\right)^4
\tag{\oeis{A005798}}
\end{align*}
Once again, $\Rgf$, just like $h$, provides a rational parameterization
of $\mathbb H/\Gamma_1(4)$,
and the differential equation \eqref{eq:diffeq}
\[
\frac{d^2t}{d\Rgf^2}-\frac{4}{\Rgf(1-16\Rgf)}\, t=0
\]
is the $Q$-form of a hypergometric differential equation, with solution
\[
t=\Rgf\ {}_2F_1(1/2,1/2;2;16\Rgf).
\]
This is the expression used to define $\Rgf(t,0)$ in \cite{BM_AEP}.
Furthermore, we note that the differential equation \eqref{eq:diffeq2}
is also of hypergeometric type,
and its two independent solutions are given by
$A={}_2F_1(1/2,1/2;1;16\Rgf)$ and
$\tau A={}_2F_1(1/2,1/2;1;16(1-\Rgf))$; leading to Ramanujan's identity \cite{Rama-modular}
\[
q=\exp\left(-2\pi \frac{{}_2F_1(1/2,1/2;1|16(1-\Rgf))}{{}_2F_1(1/2,1/;1|16\Rgf)}\right).
\]

For $N\ge 5$, one can check by direct inspection that $\Rgf$ is {\em not}\/ a Hauptmodul for $\mathbb H/\Gamma_1(N)$.
In what follows we provide examples
of explicit differential equations for $N=5,6$. Let us first discuss $N=6$, since it is simpler. 

\subsection{\texorpdfstring{$\gamma=-1$}{gamma=-1}}
For $\gamma=-1$, i.e., $\alpha=\pi/6$, the coefficients of $\Qgf(t,-1)$ are given by the number of quartic Eulerian orientations with an even number of alternating vertices minus the number with an odd number of alternating vertices. 

The compactified moduli space $\overline{\mathbb H/\Gamma_1(5)}\cong \mathbb P^1$ 
has $4$ logarithmic cusps, corresponding to $\tau=i\infty,0,1/2,1/3$.
One can check by using modular transformations of theta functions that $\Rgf$ and $\Sgf^{-1}$ are regular at the first three, whereas they have
poles at the last:
\begin{align*}
\Rgf &= \frac{1}{32 v^2}+\frac{1}{16 v}+\frac{9}{32}+\frac{5v}{8}+\frac{19v^2}{16}+\frac{9v^3}{16}
-\frac{15v^4}{8}-5v^5+O(v^6)
\\
\Sgf^{-1} &= -\frac{1}{256 v^4}-\frac{3}{256 v^3}-\frac{3}{256 v^2}+\frac{13}{256 v}+\frac{9}{32}+\frac{129
   v}{256}-\frac{37 v^2}{128}+O(v^3)
\\
\text{where }\tau&=\frac{1}{3}+i\epsilon,
\quad v=e^{\frac{2\pi i}{3}-\frac{\pi}{9\epsilon}}.
\end{align*}
As a consequence, the functions $1,\Rgf,\Rgf^{2},\Rgf^{3},\Rgf^{4},\Sgf^{-1},\Sgf^{-1}\Rgf,\Sgf^{-1}\Rgf^2,\Sgf^{-2}$ each have a pole of order at most $8$ at this cusp. 
Hence there is some non-trivial linear combination of these functions wich no such pole. By solving the relevant linear equations, we find the following example:
\[
Q(\Rgf,\Sgf^{-1})
=
256 \Rgf^4-264 \Rgf^3+128 \Rgf^2 \Sgf^{-1}+3 \Rgf^2-64 \Rgf \Sgf^{-1}+5
   \Rgf+16 \Sgf^{-2}-10 \Sgf^{-1}.
\]
$Q(\Rgf,\Sgf^{-1})$ is holomorphic everywhere
on $\overline{\mathbb H/\Gamma_1(5)}$, so it must be a constant function. Moreover it vanishes at $\tau=i\infty$ ($q=0$), so it is identically zero.

In other words, we have found a polynomial equation relating $\Rgf$ and $\Sgf$, namely
\[
256 \Rgf^4 \Sgf^2-264 \Rgf^3 \Sgf^2+3 \Rgf^2 \Sgf^2+128 \Rgf^2 \Sgf+5 \Rgf \Sgf^2-64 \Rgf \Sgf-10 \Sgf+16=0.
\]
This is the equation of an elliptic curve with a nodal singularity, so that it is of
(geometric) genus $0$, as expected. In this case the rational parameterization is easily
found by hand:
\begin{align*}
\Rgf&=h(1+2h),
\\
\Sgf/2&=\frac{1}{h(1+h)(1+4h)(1-8h)}.
\end{align*}
and agrees with a known Hauptmodul for $\Gamma_1(6)\cong \Gamma_0(6)$:
\[
h=\left(\frac{[1][6]^3}{[2][3]^3}\right)^3.
\tag{\oeis{A123633}}
\]
as can be proven in a similar way as above, by checking the relations between $h$ and $\Rgf$, $\Sgf$ at all would-be poles.

Replacing, we find the differential equation
\[
\frac{d^2t}{dh^2}-\frac{4}{1+4h}\frac{dt}{dh}-\frac{2(1+4h)}{h(1+h)(1-8h)}t=0.
\]
This equation has 5 singularities, so is not of hypergeometric type.

\subsection{\texorpdfstring{$\gamma=\frac{1+\sqrt{5}}{2}$}{gamma=(1+sqrt5)/2}}
$N=5$, $M=1,2$ correspond respectively to
$\gamma=\frac{1\mp\sqrt{5}}{2}$, which are trivially related to each other;
we pick $\gamma=\frac{1+\sqrt{5}}{2}$. The reasoning is similar to the previous section, and we skip the details.

Again, one can check that $\Rgf$ and $\Sgf$ satisfy a polynomial equation. We will not need the exact form, but its Newton polygon is
\begin{center}
\tikzset{vertex/.style={circle,draw=none,fill=red,inner sep=2pt,fill opacity=1}}
\begin{tikzpicture}
\draw[->,gray] (0,0) -- (0,3.5) node[above,black] {$\Sgf$};
\draw[->,gray] (0,0) -- (6.5,0) node[right,black] {$\Rgf$};
\foreach \x in {0,...,6}
\foreach \y in {0,...,3}
\node[circle,fill=gray,inner sep=1pt] at (\x,\y) {};
\draw[very thick,black,fill=blue,fill opacity=0.25] (0,0) -- (6,3) -- (2,3) -- (0,1) -- cycle;
\foreach \x/\y in {0/0,0/1,1/1,2/1,1/2,2/2,3/2,4/2,2/3,3/3,4/3,5/3,6/3} \node[vertex] at (\x,\y) {};
\end{tikzpicture}
\end{center}
Each red dot at a point $(j,k)$ represents a monomial $c\Rgf^{j}\Sgf^{k}$ in the polynomial.
The resulting planar curve of values $(\Rgf,\Sgf)$ has three singular points, so that its (geometric) genus is $0$, as expected.
Rather than working out its rational parameterization, one can use the known expression for a Hauptmodul
of $\Gamma_1(5)$, namely
\[
h=q\prod_{n=0}^\infty \frac{(1-q^{5n+1})(1-q^{5n+4})}{(1-q^{5n+2})(1-q^{5n+3})}
\tag{\oeis{A078905}},
\]
where, as in previous sections, we have chosen $h=q+O(q^2)$.
In terms of this Hauptmodul $h$, one has
\begin{align*}
\Rgf&=\frac{h\left(1-\frac{1+\sqrt{5}}{2}h\right)}{\left(1+(2+\sqrt{5})h\right)^3}
\\
\Sgf&=(5+\sqrt{5})\frac{\left(1+(2+\sqrt{5})h\right)^6}{h\left(1-\frac{11-5\sqrt{5}}{2}h\right)
\left(1-\frac{11+5\sqrt{5}}{2}h\right)^2\left(1-\frac{\sqrt{5}-1}{2}h\right)}
\end{align*}
The differential equation is then
\begin{multline*}
\frac{d^2 t}{dh^2}+\frac{13+7\sqrt{5}-(36\sqrt{5}+82)h+(29+13\sqrt{5})h^2}%
{\left(1-\frac{\sqrt{5}-1}{2}h\right)\left(1+(2+\sqrt{5})h\right)\left(1-\frac{11+\sqrt{5}}{2}h\right)}
\,\frac{dt}{dh}
\\
-10(3+\sqrt{5})\frac{\left(1+(\sqrt{5}+2)h\right)^4}{\left(1-\frac{11+5\sqrt{5}}{2}h\right)^2
\left(1-\frac{11-5\sqrt{5}}{2}h\right)^2}
\,t=0
\end{multline*}
which has 5 singularities, 
so is once again not hypergeometric.

\bigskip
\goodbreak

\appendix

\section{The matrix model approach}\label{sec:matrix}
In this appendix we describe the matrix model approach and how one can use it to deduce the equations \eqref{eq:loop1c} and \eqref{eq:loop2c}.
\subsection{Definition of the model}
Following \cite{artic10} and \cite{Kostov-6v}, we introduce the following
matrix integral:
\begin{equation}\label{eq:defZ}
Z_N=\int \d X \d X^\dagger \exp\left[N\tr\left(
-XX^\dagger + t X^2X^\dagger{}^2+{\gamma t\over 2}(XX^\dagger)^2\right)\right]
\end{equation}
where integration is over $N\times N$ complex matrices,
and $X^\dagger$ denotes the conjugate transpose of $X$. Then
\begin{equation}\label{eq:F}
2t\frac{\partial}{\partial t}\log Z_N = \sum_{g\ge 0} \Qgf^{(g)}(t,\gamma) N^{2-2g},
\end{equation}
where each series $\Qgf^{(g)}(t,\gamma)$ is the genus $g$ analogue of $\Qgf(t,\gamma)\equiv\Qgf^{(0)}(t,\gamma)$.

The first step is to rewrite $Z_N$:
\begin{equation}\label{eq:Strat}
Z_N=\int \d A \d X\d X^\dagger \exp\left[N\tr\left(
-{1\over 2t}A^2-XX^\dagger
+ (\omega XX^\dagger +
\omega^{-1}X^\dagger X)A
\right)\right]
\end{equation}
where $A$ is integrated over Hermitian matrices with its flat measure and
the same normalization as before (in particular $Z_N(t=0):=1$).
Performing the Gaussian integral over $A$, we recover \eqref{eq:defZ}.

Combinatorially, this corresponds to the bijection depicted in Figure \ref{fig:two vertex types transformations}, where each
4-valent vertex is split into a pair of three valent vertices such that each resulting vertex is incident to one incoming edge, one outgoing edge and one undirected edge. In the resulting map there are two types of vertices, shown in Figure \ref{fig:two_A_vertex_types}, which we call {\em right turn vertices} and {\em left turn vertices}. Each right turn vertex is given weight $\omega^{-1}$, each left turn vertex is given weight $\omega$ and each undirected edge is given weight $t$. The undirected edges then correspond to the matrix $A$ in \ref{eq:Strat}.


For any function $F$ of $X$, $X^\dagger$, $A$, denote
\[
\left<F\right> := Z_N^{-1} \int \d A\d X\d X^\dagger \,F\, e^{NS},
\qquad
S:=\tr\left(
-{1\over 2t}A^2
-XX^\dagger
+
(\omega XX^\dagger +
\omega^{-1}X^\dagger X)A
\right)
\]

We are then interested in the quantity:
\begin{equation}\label{eq:defW}
W(x)=\left< \tr \frac{1}{x-A}\right>
\end{equation}
which has the following interpretation. Write 
\[
W(x)=\sum_{g\ge 0} W^{(g)}(x) N^{1-2g},
\qquad
W^{(g)}(x)=\frac{\delta_{g,0}}{x}+\sum_{n\ge 1} \frac{W^{(g)}_n}{x^{n+1}}
\]
Then $W_n^{(g)}$ is the generating series of {\em rooted} partial Eulerian orientations of genus $g$ in which each non-root vertex is either a right turn vertex or a left turn vertex and the root vertex is incident to exactly $n$ undirected edges and no directed edges. 

\subsection{Loop equations}
In this section we use the matrix model to rederive the equations \eqref{eq:loop1c} and \eqref{eq:loop2c}, which characterise the series $\W(x)$. Kostov derived equation \eqref{eq:W1} in a slightly different way, by seeing the function $\rho(x)=\W(x+0i)-\W(x-0i)$ as being related to the $N\to\infty$ spectral density of the eigenvalues of the matrices defining $Z_{N}$. Kostov's perspective is perhaps harder to make rigorous, though it does give an intuition behind the one cut assumption.

Following a standard approach 
(see e.g.~\cite{BE-On} for a similar model),
we write {\em loop equations}\/ satisfied by $W(x)$.

First we introduce some auxiliary correlation functions:
\begin{align}\label{eq:defaux}
W(x,y)&=\left< \tr \frac{1}{x-A} \tr \frac{1}{y-A} \right>
\\
G(x)&=\left<\tr \frac{1}{x-A} XX^\dagger \right>,\qquad
\bar G(x)=\left<\tr \frac{1}{x-A} X^\dagger X \right>
\\
H(x,y)&=\left<\tr \frac{1}{x-A}X\frac{1}{y-A}X^\dagger\right>
\label{eq:defauxend}
\end{align}
Just like $W(x)$, these correlation functions can be interpreted combinatorially. For now, all that we need to know
is that $G(x)$, $\bar G(x)$ (resp.\ $W(x,y)$, $H(x,y)$) are the weighted enumeration of maps with
one boundary (resp.\ two boundaries),
where the power of $N$ is its Euler--Poincar\'e characteristic $\chi$.
Note that in the two-boundary case, the surface is not necessarily connected;
in particular, the leading contribution at $N\to\infty$ corresponds to the topology of two 
disks, i.e., $W(x,y)\buildrel N\to\infty\over\sim N^2 \W(x)\W(y)$.

Expressing that the integral of a total derivative is zero, one has
\begin{align}\label{eq:loop1}
0&=Z_N^{-1}\int \d X\d X^\dagger\d A
\tr\frac{\der}{\der A} \left(\frac{1}{x-A}e^{NS}\right)
\\\notag
&=\left<\left(\tr \frac{1}{x-A}\right)^2-N t^{-1} \tr\frac{A}{x-A}+N\tr\frac{1}{x-A}(\omega XX^\dagger+\omega^{-1}X^\dagger X)
\right>
\intertext{Similarly,}
\label{eq:loop2}
0&=Z_N^{-1}\int \d X\d X^\dagger\d A
\tr\frac{\der}{\der X}\left(\frac{1}{x-A}X \frac{1}{y-A} e^{NS}\right)
\\\notag
&=\left<\tr\frac{1}{x-A}\tr\frac{1}{y-A}+N\tr\frac{1}{x-A}X \frac{1}{y-A}(-X^\dagger+\omega X^\dagger A+\omega^{-1}AX^\dagger)
\right>
\end{align}

In terms of $W(x)$ \eqref{eq:defW} and of the auxiliary correlation functions \eqref{eq:defaux}--\eqref{eq:defauxend},
we can rewrite \eqref{eq:loop1} and \eqref{eq:loop2}
\begin{align}\label{eq:loop1b}
0&=W(x,x)-N t^{-1} (x W(x)-N)+N(\omega G(x)+\omega^{-1}\bar G(x))
\\\label{eq:loop2b}
0&=W(x,y)+N H(x,y)(-1+\omega x+\omega^{-1}y)
-N \omega \bar G(y)-N\omega^{-1}G(x)
\end{align}

We now look at the leading behavior of \eqref{eq:loop1b} and \eqref{eq:loop2b} as $N\to\infty$.
Recalling that $W(x,y)\sim W(x)W(y)\sim N^2 \W(x)\W(y)$, we obtain equations \eqref{eq:loop1c} and \eqref{eq:loop2c}:
\begin{align*}
0&=\W(x)^2-t^{-1}(x \W(x)-1)+\omega \G(x)+\omega^{-1}\bG(x)
\\
0&=\W(x)\W(y)+\H(x,y)(-1+\omega x+\omega^{-1}y)-\omega\bG(y)-\omega^{-1}\bG(x)
\end{align*}

\section*{Acknowledgments}
We would like to thank Alin Bostan for pointing out several references for Proposition \ref{prop:classical_Dfinite}. The first author would also like to thank Mireille Bousquet-M\'elou for several useful conversations about the subject.

\gdef\MRshorten#1 #2MRend{#1}%
\gdef\MRfirsttwo#1#2{\if#1M%
MR\else MR#1#2\fi}
\def\MRfix#1{\MRshorten\MRfirsttwo#1 MRend}
\renewcommand\MR[1]{\relax\ifhmode\unskip\spacefactor3000 \space\fi
\MRhref{\MRfix{#1}}{{\scriptsize \MRfix{#1}}}}
\renewcommand{\MRhref}[2]{%
\href{http://www.ams.org/mathscinet-getitem?mr=#1}{#2}}
\bibliographystyle{amsalphahyper}
\bibliography{biblio}

\Addresses

\end{document}